\documentclass{amsart}

\usepackage{amssymb} \usepackage{amsfonts} \usepackage{amsmath}
\usepackage{amsthm} 
\usepackage{epsfig} 
\usepackage{color}
\usepackage{amscd}
\usepackage[all]{xy}
\usepackage[english]{babel}
\usepackage{todonotes}
\usepackage[capitalize]{cleveref}
\usepackage{amssymb, amsfonts, amsmath}
\usepackage{pgfplots}
\usepackage{comment} 
\usepackage{mathtools}
\usepackage{wrapfig}
\usepackage{tikz}
\usepackage{mathrsfs}
\usepackage{tabularx}
\usepackage{amssymb} \usepackage{amsfonts} \usepackage{amsmath}
\usepackage{amsthm} \usepackage{epsfig, subfig}
\usepackage{ amscd, amsxtra, latexsym}
\usepackage{epsfig,  graphicx, psfrag}
\usepackage[all]{xy}
\usepackage{caption}
\usepackage{enumerate}
\usepackage{color}
\pgfplotsset{compat=1.13} 

\usepackage{esint}
\usepackage{array}
\usepackage [english]{babel}
\usepackage [autostyle, english = american]{csquotes}
\MakeOuterQuote{"}

\newtheorem{lemma}{Lemma}[section]
\newtheorem{thm}[lemma]{Theorem}
\newtheorem{prop}[lemma]{Proposition}
\newtheorem{cor}[lemma]{Corollary}

\newtheorem{prop_intro}{Proposition}

\newtheorem{thm_intro}[prop_intro]{Theorem}

\newtheorem{cor_intro}[prop_intro]{Corollary}

\theoremstyle{definition}

\newtheorem{defn}[lemma]{Definition}

\newtheorem{rem}[lemma]{Remark}

\newtheoremstyle{citing}
{3pt}
{3pt}
{\itshape}
{}
{\bfseries}
{.}
{.5em}
{\thmnote{#3}}
\theoremstyle{citing}

\newcommand{\calT} {\ensuremath {\mathcal{T}}}

\newcommand{\R}{\mathbb{R}}
\newcommand{\C}{\mathbb{C}}

\newcommand{\Hp} {\ensuremath {\mathbb{H}^2}}
\newcommand{\Hn} {\ensuremath {\mathbb{H}^n}}

\newcommand{\str} {\ensuremath {\textrm{str}}}
\newcommand{\strtil} {\ensuremath {\widetilde{\textrm {str}}}}

\newcommand{\bi} {{\partial}}

\DeclareMathOperator{\isom}{Isom}

\newcommand{\commento}[1]{}

\author{Gian Maria Dall'Ara}
\address{Istituto Nazionale di Alta Matematica ``Francesco Severi", Research Unit Scuola Normale Superiore, 
	Piazza dei Cavalieri 7, 56126 Pisa, Italy}
\email{dallara@altamatematica.it}

\author{Roberto Frigerio}
\address{Dipartimento di Matematica, Largo Pontecorvo 5, 56127 Pisa, Italy}
\email{roberto.frigerio@unipi.it}

\author{Ervin Had\v{z}iosmanovi\'c}
\address{Scuola Normale Superiore, 
Piazza dei Cavalieri 7, 56126 Pisa, Italy}
\email{ervin.hadziosmanovic@sns.it}

\title[]{Bounded cohomology classes \\ from differential forms}
\keywords{}

\begin{document}
	
\begin{abstract}
Let $M$ be a complete hyperbolic $n$-manifold, $n\geq 2$. Via integration over geodesic simplices, any closed bounded differential 2-form on $M$ 
defines a bounded cohomology class in $H^2_b(M)$. 
It was proved by Barge and Ghys (for $n=  2$) and by Battista et al.~(for $n>2$) that, if $M$ is closed, then this procedure 
defines an injective embedding of the (infinite-dimensional) space of closed  differential $2$-forms
on $M$ into  $H^2_b(M)$. 

We extend this result to the case when the fundamental group of $M$ is of the first kind, i.e.~its limit set is equal to the whole
boundary at infinity of hyperbolic space (this holds, for example, when $M$ has finite volume).
Our argument is different from Barge and Ghys' original one, and relies on the following fact of independent interest: an $L^\infty$ function on the hyperbolic plane is determined by its integrals over all ideal triangles. We prove this fact by way of Fourier analysis on the hyperbolic plane. 

\end{abstract}

\maketitle

\section{Introduction}
Bounded cohomology is a functional-analytic analogue of singular cohomology introduced by Johnson \cite{johnson1972cohomology} and Trauber for Banach algebras and defined by Gromov for topological spaces and groups in his seminal paper \cite{Gromov82}. Due to the lack of the excision property, bounded cohomology is very different from ordinary cohomology. Roughly speaking, bounded cohomology is usually large (i.e.~infinite dimensional) in presence of negative
curvature, while it vanishes for non-negatively curved spaces. For example, the bounded cohomology of spheres and of $n$-dimensional tori (and, more in general, of any
topological space with an amenable fundamental group) vanishes in every positive degree, 
while it is infinite-dimensional in degree 2 and 3 for closed hyperbolic manifolds of any dimension bigger than one. In fact, the bounded cohomology of negatively curved manifolds has been extensively studied, both via techniques coming from (geometric) group
theory  (e.g.~via the theory of quasimorphisms and quasicocycles~\cite{BrooksSeries, Picaud, Fuji1, Calegari, extension, zeronorm}), and via the study of bounded classes coming from  differential forms (see e.g.~\cite{bargeghys, soma1, soma2,  farre, Marasco, Battista, Bargagnati}). 

In this paper we focus on the study of the second bounded cohomology group of hyperbolic manifolds via integration of differential forms. 
Let $M$ be a complete hyperbolic manifold and let $\Omega^2(M)$ be the space of smooth differential $2$-forms on $M$. 
In 1988, Barge and Ghys~\cite{bargeghys} proved that, if $\dim M=2$ and $M$ is closed (i.e.~compact without boundary), then
the map
\begin{equation}\label{psi}
\psi\colon \Omega^2(M)\to H^2_b(M),
\end{equation}
defined by integrating differential forms over geodesic triangles, is injective (see \cref{sec:preliminari} for the precise definition), thus showing that $H^2_b(M)$ contains
an infinite-dimensional subspace represented by differential forms. Very recently, in~\cite{Battista} 
it was shown that Barge and Ghys' argument 
also applies to degree-$2$ closed differential forms in higher dimensional {closed} manifolds.

A natural question is whether the injectivity of $\psi$ may hold even without the assumption that $M$ be compact. 
First observe that, if $M$ is compact, then cocycles corresponding to smooth differential $2$-forms are indeed bounded because geodesic triangles in $M$ have uniformly bounded area, and any differential form on a compact manifold is bounded. When $M$ is non-compact, in order to define {the map $\psi$} we need to restrict to bounded forms. 
Recall that a degree-$k$ form  on  $M$ is bounded if the supremum of the values it attains on all the
orthonormal $k$-frames of the tangent bundle of $M$ is bounded. If $Z\Omega^2_b(M)$ denotes the space of closed and bounded 
differential $2$-forms on $M$, then we obtain a map
$$
\psi\colon Z\Omega^2_b(M)\to H^2_b(M)\ .
$$
As mentioned above, it was shown in~\cite{Battista} that this map is injective provided that $M$ be closed. If $M$ is non-compact, then
it may well be that $\psi$ is not injective: this happens, for example, if the fundamental group of $M$ is amenable
in which case
 $H^2_b(M)=0$, while of course $ Z\Omega^2_b(M)\neq 0$. Our main theorem completely characterizes the injectivity of $\psi$
 in terms of the dynamics of the fundamental group of $M$ on the boundary at infinity of hyperbolic space $\mathbb{H}^n$. If $\Gamma$ is a 
 subgroup of $\isom(\Hn)$, then its \emph{limit set} $\Lambda(\Gamma)$ is the intersection between $\bi\Hn$ and  the closure
 in $\overline{\Hn}$ of any $\Gamma$-orbit in $\Hn$ (it is well known and easy to check that such intersection does not depend on the chosen $\Gamma$-orbit). 
 Recall that a complete hyperbolic manifold $M=\mathbb{H}^n/\Gamma$ (or its fundamental group $\Gamma$) is \emph{of the first kind} if $\Lambda(\Gamma)$ is equal to the whole $\partial \mathbb{H}^n$, and it is \emph{of the second kind} otherwise.
  In this paper we prove the following:

\begin{thm_intro}\label{main:thm}
Let $M$ be a complete hyperbolic $n$-manifold, $n\geq 2$. Then the map
$$
\psi\colon Z\Omega^2_b(M)\to H^2_b(M)
$$
is injective if and only if $M$ is of the first kind.
\end{thm_intro}

It is well known that complete finite volume hyperbolic manifolds of any dimension are of the first kind, hence we get the following:
 
 \begin{cor_intro}
 Let $M$ be a complete finite volume hyperbolic manifold. Then the map
 $$
\psi\colon Z\Omega^2_b(M)\to H^2_b(M)
 $$
is injective.
 \end{cor_intro}

If $M$ is not of the first kind, then the convex core $C$ of $M$
is a proper subset of $M$, and it is not difficult to show that any differential $2$-form supported in the complement of $C$ belongs to the kernel of $\psi$ 
(see Section~\ref{noninj:sec}). 
Thus, the ``only if'' implication of Theorem~\ref{main:thm} is quite elementary.

On the other hand, proving the injectivity of $\psi$ under the assumption that $M$ is of the first kind is much harder.
The arguments described in~\cite{bargeghys} (and in~\cite{Battista}, for the higher dimensional case) do not seem to generalize to the case of  
non-compact manifolds. 
In fact, Barge--Ghys' argument relies on the fact that a differential $2$-form $\omega$ such that $\psi(\omega)=0$ admits a primitive $\alpha$ whose integral on every
closed geodesic vanishes.
Then, Liv\v{s}ic's Theorem for the geodesic flow on the unit tangent bundle~\cite[Corollary 1.5]{Croke}  implies that $\alpha$ is exact, which gives
in turn that $\omega=d\alpha=0$.
Problematic issues seem to arise 
if we allow $M$ to be non-compact:
while a version of Liv\v{s}ic's Theorem exists at least for the geodesic flow on finite volume hyperbolic manifolds \cite[Theorem 1.3]{ramirez2013invariant}, such result applies only to \textit{bounded} differential forms, and it is easy to construct examples of exact bounded  differential $2$-forms which do not admit any bounded primitive,
even for finite area surfaces (the existence of a bounded primitive of the volume form implies the positivity of Cheeger's isoperimetric constant \cite[Proposition 2.6]{ervin_cheeger}, but this constant vanishes, for example, for once-punctured tori endowed with a complete, finite area hyperbolic metric). 
Also observe that, for complete hyperbolic manifolds, the ergodicity of the geodesic flow (which may prove useful in arguments \emph{\`a la Liv\v{s}ic}) is in general a strictly stronger 
condition than being of the first
kind: 
there exist hyperbolic surfaces (of infinite type) of the first kind 
whose geodesic flow is not ergodic (see e.g.~\cite{basmajian2022type}  and \cite{pandazis2023ergodicity}).  We refer the reader to Remark~\ref{dynamic:rem} below for more
details on the relationship between the ergodocity of the geodesic flow of $M=\mathbb{H}^n/\Gamma$ and the dynamics of the action of $\Gamma$ on $\bi\Hn$.

 

In order to prove our main result, we follow a completely different strategy, which is based on the computation of bounded cohomology via resolutions coming from amenable actions.
Roughly speaking, amenable actions play in the theory of bounded cohomology an analogous r\^ole as  properly discontinuous actions in the theory of classical cohomology. In our context, thanks to the amenability of the action of $\Gamma$ on $\Hn$ (see Lemma~\ref{amenability} {below}), and a fundamental result by Burger and Monod  (Theorem~\ref{amenable_resolution} below)
we can compute the bounded cohomology of $M=\mathbb{H}^n/\Gamma$ by looking at the cochain complex  $L^\infty_{\rm alt} ((\partial \Hn)^{\bullet +1})^\Gamma$ of
$\Gamma$-invariant 
alternating $L^\infty$-functions on $(\partial \Hn)^{\bullet+1}$.

For $\omega\in Z\Omega^2_b(M)$,  the class $\psi(\omega)$ is represented in $L^\infty_{\rm alt} ((\partial\Hn)^3)^\Gamma$ by the cocycle
which associates to the triple $(\xi_0,\xi_1,\xi_2)\in (\partial \Hn)^3$ the integral of the lift of $\omega$ on the ideal triangle with vertices $\xi_0,\xi_1,\xi_2$. Now,
if we assume the action of $\Gamma$ on $\partial \Hn$ to be doubly ergodic (which is well known to be equivalent to the ergodicity of the geodesic flow of $M$, see Remark~\ref{dynamic:rem} below), 
the cochain complex  $L^\infty_{\rm alt} ((\partial \Hn)^{\bullet +1})^\Gamma$ does not contain any non-trivial coboundary in degree 2, hence 
$\psi(\omega)$ is trivial in bounded cohomology if and only if the cocycle just described vanishes identically. In fact, thanks to~\cite[Proposition 3.1]{BurgerIozzi},
a refined version of this argument works also under the weaker assumption that $\Gamma$ be of the first kind.
Since the lift $\widetilde{\omega}$ of $\omega$ to $\mathbb{H}^n$ is clearly bounded, in order to conclude
the proof of Theorem~\ref{main:thm} we are thus left to show that the following holds:

\begin{thm_intro}\label{thm:pompeiu_1}
Let $\widetilde{\omega}\in Z\Omega^2_b(\mathbb{H}^n)$ be such that
$$
\int_T \widetilde{\omega}=0
$$
for every ideal triangle $T\subseteq \Hn$. Then $\widetilde{\omega}=0$.
\end{thm_intro}

It is easy to reduce Theorem~\ref{thm:pompeiu_1} to the case $n=2$. If $n=2$,
we have $\widetilde{\omega}=f\, dA$ for some smooth bounded function $f\colon \Hn\to\mathbb{R}$, where $dA$ is the hyperbolic area form. 
Thus, we will finally deduce (the difficult implication of) Theorem~\ref{main:thm} from  the following:

\begin{thm_intro}\label{thm:pompeiu_0}
Let $f\in L^\infty (\Hp)$ be such that
$$
\int_T f\, dA=0
$$
for every ideal triangle $T\subseteq \Hp$, where $dA$ is the hyperbolic area. Then $f=0$.
\end{thm_intro}

This is a new result in integral geometry, the field centered around the problem of reconstructing a function defined on a manifold (or, more generally, a section of a vector bundle) from its integrals over a given family of submanifolds (see \cite{gelfand_book} for an introduction to this fascinating topic).
The theorem says that a bounded function on the hyperbolic plane is uniquely determined by its integrals over all ideal triangles. In Section \ref{section:pompeiu} we prove, and discuss more thoroughly, Theorem \ref{thm:pompeiu_0}, rephrasing it as the injectivity of an \emph{ideal triangle transform}. As this is all we need for our present application, we do not investigate this transform further, even though it could be interesting to understand its range, to find explicit inversion formulas etc.~(see, e.g., \cite{volchkov_book} for integral geometric questions on symmetric spaces in a similar spirit as Theorem \ref{thm:pompeiu_0}). Our proof of Theorem \ref{thm:pompeiu_0} uses the hyperbolic version of a fundamental result in (Euclidean) Fourier analysis, the Wiener's Tauberian Theorem, established around 20 years ago by Mohanty, Ray, Sarkar, and Sitaram. See Section \ref{section:pompeiu} for details. We remark that this section of the paper has been written for a reader not previously acquainted with Fourier analysis on hyperbolic spaces. 

\bigskip

Theorem~\ref{thm:pompeiu_0} applies to $L^\infty$ functions which do not need to be smooth, and in fact our Theorem~\ref{main:thm} can be easily generalized, in the case of surfaces, to deal with differential forms with $L^\infty$ coefficients. However, if one allows differential forms to have non-smooth coefficients defined almost everywhere, then in higher dimensions even defining the map $\psi$ is not trivial at all. We will take care of this issue in a subsequent paper. 

\bigskip

One may wonder whether our Theorem~\ref{main:thm} might be generalized to deal with degree-$d$ smooth differential forms, $d\geq 3$. 
The hyperbolic volume of straight
simplices is uniformly bounded in every dimension not smaller than 2, hence the map $\psi\colon Z\Omega^d_b(M)\to H^d_b(M)$ is still well defined. However, if
$d\geq 3$ and a form $\omega\in Z\Omega^d_b(M)$ admits a bounded primitive $\alpha\in \Omega_b^{d-1}(M)$, then by integrating $\alpha$ on straight $(d-1)$-simplices
(which still have uniformly bounded volume, since $d\geq 3$) one obtains a bounded primitive of the singular cocycle associated to $\omega$, whence $\psi(\omega)=0$.
This shows that $\psi$ can never be injective: if $\alpha$ is any non-closed compactly supported $(d-1)$-form on $M$ (such a form always exists if $d\leq \dim M$), then $\omega=d\alpha$ is a non-trivial element
of $\ker \psi$.

However, it is maybe worth mentioning that, if $M$ is non-compact, there can be plenty of elements of $Z\Omega^d_b(M)$ which do not admit bounded primitives and which define
interesting bounded cohomology classes: for example, volume forms of geometrically infinite hyperbolic $3$-manifolds homotopy equivalent to a hyperbolic surface $S$ were exploited  in~\cite{soma1,soma2} to define
infinitely many linearly independent classes in
$H^3_b(S)$; in higher dimensions, the question whether the volume form of a manifold $M$ of infinite volume defines a non-trivial bounded cohomology class 
(i.e.~it does not belong to $\ker \psi$) has been investigated in~\cite{kim2015bounded,ervin_cheeger}.



\subsection{Plan of the paper} In Section~\ref{sec:preliminari} we introduce the bounded cohomology of topological spaces and we provide  the details
for the definition of the map $\psi\colon Z\Omega^2_b(M)\to H^2_b(M)$. Section~\ref{noninj:sec}  is devoted to the non-injectivity of $\psi$ for manifolds of the second kind.
In Section~\ref{boundary:sec} we exploit Burger and Monod's approach to bounded cohomology of groups via amenable actions
to translate the injectivity of $\psi$ into a problem of integral geometry on the hyperbolic plane. We solve this problem in Section~\ref{section:pompeiu}, which is devoted to the proof
of Theorem~\ref{thm:pompeiu_0}. Finally, in the last section we conclude the proof of our main result, Theorem~\ref{main:thm}.

\subsection{Acknowledgements} The authors thank Gabriele Viaggi and Fulvio Ricci for useful conversations. Roberto Frigerio and Ervin Had\v{z}iosmanovi\'{c} are partially supported by
 INdAM through GNSAGA. Gian Maria Dall'Ara is supported by INdAM.

\section{Preliminaries}\label{sec:preliminari}

\subsection{Bounded cohomology of spaces}
Let $X$ be a topological space. The \emph{bounded cohomology} $H^\bullet_b(X)$ of $X$ (with real coefficients) is the cohomology of the complex
\[
	0\rightarrow C^0_b(X)\rightarrow C^1_b(X)\rightarrow C^2_b(X)\rightarrow\cdots \,,
\]
where $C^n_{b}(X)$ denotes the space of bounded singular cochains of $X$, the differential maps are the restrictions of the boundary maps of the usual singular cochain complex $C^\bullet(X)$ and a cochain $\varphi \in C^n(X)$ is called \emph{bounded} if
\[
	\lVert \varphi \rVert_\infty =
	\sup \bigl\{ | c(\sigma) |, \; \sigma \mbox{ is a singular $n$-simplex}
	\bigr\} < \infty .
\]

\subsection{Bounded cohomology and differential forms}\label{sub:derham clas}
Let $M$ be a complete connected hyperbolic $n$-manifold. Then $M$ is isometric to $\Hn/\Gamma$, where $\Gamma\cong \pi_1(\Sigma)$
is a torsion-free Kleinian group, i.e.~a torsion-free discrete subgroup of $\isom(\Hn)$. The hyperbolic metric on $M$ allows us to define a \emph{straightening} procedure for simplices as follows:
if $\tilde{s}$ is a singular simplex of $\Hn$, its straightening $\strtil(\tilde{s})$ is a suitable smooth parametrization of the geodesic singular simplex in $\Hn$ having the same vertices as $\tilde{s}$;
if $s$ is a singular simplex with values in $M$, then its straightening  $\str(s)$ is the image in $M=\Hn/\Gamma$ of $\strtil(\tilde{s})$, where 
$\tilde{s}$ is any lift of $s$ to $\Hn$. The singular simplices $\strtil(\tilde{s})$ and $\str(s)$ are called \emph{straight}. In fact, the singular simplex $\strtil(\tilde{s})$
only depends on its vertices, hence in the sequel we will speak of \emph{the} straight simplex in $\Hn$ with a given ordered set of vertices. Moreover,
the linear extension of the straightening operator to the space of singular chains defines a chain map
$\str\colon C_\bullet (M)\to C_\bullet (M)$, which is chain homotopic to the identity
(for the details see, e.g., \cite[Section 8.4]{Frigeriobook}).

Let $\Omega^2(M)$ be the space of smooth differential $2$-forms on $M$.
For every $\omega\in\Omega^2(M)$ we set
$$
\|\omega\|_\infty=\sup \{|\omega_p(e_1,e_2)|\, ,\ p\in M\, ,\ (e_1,e_2)\ \text{orthonormal}\ 2-\text{frame in}\ T_p M\}\ .
$$
We say that $\omega$ is \emph{bounded} if $\|\omega\|_\infty<\infty$ (this condition always holds if $M$ is compact), and we denote by
$\Omega^2_b(M)\subseteq \Omega^2(M)$ the subspace of bounded 2-forms. 

Every $\omega \in \Omega^2(M)$ defines a singular $2$-cochain $c_\omega \in C^2(M)$ in the following way: for every singular $2$-simplex $s$, 
$$c_\omega(s)=\int_{\str(s)} \omega\ .$$ 
Since $\str(s)$ is smooth, the cochain $c_\omega$ is indeed well defined; moreover, $c_\omega(s)$  is the integral of 
$\widetilde{\omega}$ on the straightening of any lift of $s$, where $\widetilde{\omega}$ denotes the pull-back of $\omega$ to the universal covering $\mathbb{H}^n$.
Since the area of geodesic triangles in $\Hn$ is bounded from above by $\pi$, we have
$\|c_\omega\|\leq \pi \|\omega\|_\infty$, hence the singular cochain $c_\omega$ is bounded 
whenever
$\omega$ is bounded. Finally, by Stokes' Theorem (together with the fact that the straightening is a chain map), if $\omega$ is closed
then $c_\omega$ is a cocycle. Therefore, if we denote by $Z\Omega^2_b(M)$ the space of bounded closed differential $2$-forms on $M$,
then we have a well-defined map
$$
\psi\colon Z\Omega^2_b(M)\to H^2_b(M)\, ,\qquad \psi(\omega)=[c_\omega]\ .
$$

With this notation,
Barge and Ghys' Theorem (and its extension by Battista et al. to higher dimensions) may be restated as follows:

\begin{thm}[{\cite[Theorem 3.2]{bargeghys} and \cite[Theorem 1]{Battista}}]\label{BG:thm}
Suppose that $M$ is compact.
Then the map $\psi\colon Z\Omega^2_b(M)\to H^2_b(M)$ is injective.
\end{thm}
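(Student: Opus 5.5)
In the compact case this is the theorem of Barge--Ghys and Battista et al., and I would follow their strategy. Suppose $\omega\in Z\Omega^2_b(M)$ and $\psi(\omega)=0$, so that $c_\omega=\delta b$ for some bounded singular $1$-cochain $b$. The argument has three steps:
\begin{enumerate}
\item show that $\omega$ is exact;
\item build a primitive $\alpha$ whose integral over every closed geodesic vanishes;
\item apply Liv\v{s}ic's theorem.
\end{enumerate}

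\textbf{Step 1: exactness.} The bounded class $[c_\omega]$ maps under the comparison map to the de Rham class of $\omega$. This uses the de Rham isomorphism together with the fact that $\str$ is chain homotopic to the identity. Hence $[\omega]=0$ in $H^2_{dR}(M)$, and we may write $\omega=d\alpha$ for some smooth $1$-form $\alpha$. Since $M$ is compact, $\alpha$ is bounded.

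\textbf{Step 2: a cochain that is cocycle plus bounded.} Define the $1$-cochain $a(s)=\int_{\str(s)}\alpha$ on singular $1$-simplices. Stokes' theorem on straight simplices gives $\delta a = c_\omega = \delta b$. Therefore $z=a-b$ is a $1$-cocycle, and since $a$ is bounded (geodesic segments have bounded length only locally, so I will first check this claim), $z$ differs from a genuine cocycle by a bounded error.

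Lengths of straight $1$-simplices are unbounded, so $a$ itself is unbounded. The cleaner route is to pass to the group: the class $[z]\in H^1(\Gamma)=\mathrm{Hom}(\Gamma,\R)$ is a homomorphism $h$. Consider the function $\gamma\mapsto\int_{[x,\gamma x]}\widetilde\alpha$. It differs from $h$ by a bounded amount, which makes it a quasimorphism. A homomorphism can be subtracted by adding a closed $1$-form representing $h$. So after modifying $\alpha$ by a closed form, I may assume the map $\gamma\mapsto\int_{[x,\gamma x]}\widetilde\alpha$ is bounded on $\Gamma$.

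Now fix a closed geodesic $c$ representing $\gamma$, and choose the basepoint $x$ on the axis of $\gamma$. Then
$$\int_{[x,\gamma^k x]}\widetilde\alpha = k\int_c\alpha,$$
and the left-hand side is bounded uniformly in $k$. Letting $k\to\infty$ forces $\int_c\alpha=0$ for every closed geodesic $c$.

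\textbf{Step 3: Liv\v{s}ic.} Apply Liv\v{s}ic's theorem for the geodesic flow on the unit tangent bundle, in the form of \cite[Corollary 1.5]{Croke}. Applied to the function $v\mapsto\alpha(v)$, it shows that $\alpha$ is exact. Hence $\omega=d\alpha=0$.

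\textbf{Main obstacle.} The delicate step is Step 2: handling the bounded primitive $b$ against the unbounded straightened integral $a$, and converting this into the vanishing of periods on closed geodesics. The key point is homogeneity along powers $\gamma^k$ with the basepoint on the axis. I would make this rigorous by passing to the group cochain picture, where bounded $1$-cochains on $\Gamma$ give exactly quasimorphisms with bounded defect.
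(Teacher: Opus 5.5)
Your proposal is correct, and it is the Barge--Ghys argument that the paper cites for this statement and summarizes in its introduction. You produce a primitive $\alpha$ whose integrals over all closed geodesics vanish, using that $\gamma\mapsto\int_{[x,\gamma x]}\widetilde\alpha$ is at bounded distance from a homomorphism, and then conclude with Liv\v{s}ic's theorem \cite[Corollary 1.5]{Croke}. The paper gives no separate proof, and its own Theorem~\ref{main:thm} recovers the compact case by a different route, through boundary cocycles and the ideal triangle transform. Drop the retracted claim that $a$ is bounded. Moving $x$ onto the axis of $\gamma$ is harmless, since $\bigl|\int_{[x,\gamma x]}\widetilde\alpha-h(\gamma)\bigr|\le\|b\|_\infty$ uniformly in $x\in\Hn$: evaluate $z=a-b$ on the projection to $M$ of the segment $[x,\gamma x]$, which is a straight loop.
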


\section{Injectivity fails for Kleinian groups of the second kind}\label{noninj:sec}

We first prove that, if $M=\Hn/\Gamma$ is a hyperbolic manifold of the second kind, then $\psi$ is not injective. Let us recall that a subset $C\subseteq M$ is \emph{convex} if the following condition holds: if $p,q\in C$, then \emph{every} geodesic in $M$ joining $p$ to $q$ is contained in $C$
(with this definition, if $\Gamma\neq \{1\}$, then a single point is not convex). The \emph{convex core} of $M$ is the smallest closed convex subset $C$ of $M$. If $\Gamma$
is non-elementary, then it is well-known that $C$ is the projection in $M$ of the convex hull of the limit set $\Lambda(\Gamma)$ of $\Gamma$. In this case, moreover,
there is a strong deformation retraction of $M$ onto $C$, i.e.~a homotopy $H\colon M \times [0,1]\to M$ such
that $H(\cdot, 0)=\textrm{Id}_M$, $H(x,t)=x$ for every $x\in C$ and $H(x,1)\in C$ for every $x\in M$. 

\begin{prop}\label{convexcore}
	Suppose that $\Gamma$ is not elementary, let $C$ be the convex core of $M$, and let $\text{res}\colon Z\Omega^2_b (M)\to \Omega^2_b (C)$ be the restriction map. 
	Then
	$$
	\ker \mathrm{res} \subseteq \ker \psi\ .
	$$ 
\end{prop}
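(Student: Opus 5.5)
The plan is to show that if $\omega\in Z\Omega^2_b(M)$ vanishes on $C$, then the cocycle $c_\omega$ is cohomologous, \emph{as a bounded cocycle}, to a cochain that vanishes identically; indeed, we aim to show $[c_\omega]=0$ in $H^2_b(M)$ by exploiting the retraction $H$ onto the convex core.

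\textbf{Step 1: restrict to simplices in $C$.} Let $r=H(\cdot,1)\colon M\to C$ be the retraction, and let $i\colon C\to M$ be the inclusion. Since $i\circ r$ is homotopic to $\mathrm{Id}_M$, the induced maps in bounded cohomology give $r^*\circ i^*=\mathrm{Id}$ on $H^2_b(M)$. Here we use that bounded cohomology is a homotopy invariant, because homotopic maps induce chain homotopic maps on singular chains, and the standard prism chain homotopy sends a simplex to a bounded number of simplices, hence preserves boundedness of cochains. It therefore suffices to prove that $i^*[c_\omega]=0$ in $H^2_b(C)$, i.e.~that the restriction of $c_\omega$ to singular simplices with image in $C$ is a bounded coboundary. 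In fact we aim to show it is identically zero.

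\textbf{Step 2: straight simplices with vertices in $C$ lie in $C$.} Let $s$ be a singular $2$-simplex in $C$. Lift $s$ to $\tilde s$ in $\Hn$; its vertices lie in the preimage $\widetilde C$ of $C$, which is the convex hull of $\Lambda(\Gamma)$ and hence convex in $\Hn$. The geodesic triangle $\strtil(\tilde s)$ spanned by these vertices is then contained in $\widetilde C$, so $\str(s)$ is contained in $C$. Since $\omega$ vanishes on $C$, we would like to conclude $c_\omega(s)=\int_{\str(s)}\omega=0$.

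\textbf{Step 3: handle the boundary of $C$.} The one subtlety is that $\omega|_C=0$ must mean that $\omega$ vanishes at points of $C$ as a form on $M$, so that the pullback of $\omega$ along $\str(s)$ is zero. This holds pointwise, so the integral vanishes even when $\str(s)$ is degenerate or lies in $\partial C$. Thus $i^*c_\omega=0$ on the nose, and hence $[c_\omega]=r^*i^*[c_\omega]=0$. Consequently $\omega\in\ker\psi$.

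The main obstacle is Step 1, namely making sure that the homotopy invariance is legitimately applied in bounded cohomology rather than in ordinary cohomology. This is standard: homotopy invariance holds for any continuous homotopy. The non-elementary hypothesis enters only through the identification of $C$ with the projection of the convex hull of $\Lambda(\Gamma)$, which gives the convexity of $\widetilde C$ used in Step 2, and through the existence of the retraction $H$.
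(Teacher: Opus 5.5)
Your proposal is correct and is essentially the paper's argument. The paper treats $r$ directly as a self-map of $M$ homotopic to the identity and observes that $r^*c_\omega$ vanishes identically, because every straight simplex with vertices in $C$ lies in $C$. That is the same computation as your factorization $r^*\circ i^*$ through $H^2_b(C)$. Your lifting to the convex set $\widetilde C\subseteq\Hn$ simply spells out the convexity of $C$ in $M$ that the paper takes from its definition.
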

\begin{proof}
	Let $r\colon M\to C$ be the retraction described above. Since $r$ is homotopic to the identity and homotopic maps induce the same map in bounded cohomology,
	for every $\omega\in Z\Omega^2_b(M)$ we have $r^*(\psi(\omega))=\psi(\omega)$. Let now $s\colon \Delta^2 \to M$ be a singular $2$-simplex. The simplex $r_*(s)$ has all its vertices
	in $C$, hence, by convexity of $C$, its straightening $\str(r_*(s))$ is contained in $C$. Therefore, if $\omega\in \ker \textrm{res}$, i.e.~$\omega|_C=0$, then
	$$
	r^*(c_\omega)(s)=c_\omega (r_*(s))=\int_{\str(r_*(s))} \omega=0\ .
	$$
	Therefore $r^*(\psi(\omega))$ is represented by the trivial cocycle, and $\psi(\omega)=r^*(\psi(\omega))=0$. This concludes the proof.
	\end{proof}

\begin{cor}\label{noninjectivity}
If $M$ is of the second kind, then the map $\psi\colon Z\Omega^2_b(M)\to H^2_b(M)$ is not injective.
\end{cor}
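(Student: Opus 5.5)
The plan is to apply Proposition~\ref{convexcore}: it suffices to produce a nonzero $\omega\in Z\Omega^2_b(M)$ that vanishes on the convex core $C$. Then $\omega\in\ker\mathrm{res}\subseteq\ker\psi$, so $\psi$ is not injective.

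The elementary case is treated separately, since Proposition~\ref{convexcore} assumes $\Gamma$ non-elementary. A torsion-free elementary Kleinian group is virtually abelian, hence amenable. By Gromov's theorem (or Johnson's, for the group), $H^2_b(M)=H^2_b(\Gamma)=0$. On the other hand $Z\Omega^2_b(M)\neq 0$: if $n=2$, any compactly supported nonzero function times the area form is closed and bounded; if $n\geq 3$, take $d\alpha$ for a compactly supported $1$-form $\alpha$ with $d\alpha\neq 0$. So $\psi$ is not injective in this case.

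Now assume $\Gamma$ is non-elementary and of the second kind. Since $\Lambda(\Gamma)\neq\partial\Hn$, the convex hull of $\Lambda(\Gamma)$ misses a neighbourhood in $\overline{\Hn}$ of some point $\xi\in\partial\Hn\setminus\Lambda(\Gamma)$. Hence $C$ is a proper closed subset of $M$, and $U=M\setminus C$ is a nonempty open set. Choose a small embedded ball $B\subseteq U$ and a nonzero $1$-form $\alpha$ compactly supported in $B$ with $d\alpha\neq 0$; for example $\alpha=\varphi\,dx_1$ in local coordinates, with $\varphi$ a bump function, so that $d\alpha=d\varphi\wedge dx_1\neq 0$. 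Put $\omega=d\alpha$. It is closed, smooth, and compactly supported, hence bounded, and it vanishes on $C$. Alternatively, when $n=2$ one can take $\omega=f\,dA$ with $f$ a nonzero bump supported in $U$; every top-degree form is closed. Either way $\omega\in\ker\mathrm{res}$ with $\omega\neq 0$, and Proposition~\ref{convexcore} gives $\psi(\omega)=0$.

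The only point needing care is that $C\neq M$ when $\Lambda(\Gamma)\neq\partial\Hn$. This follows because the closure in $\overline{\Hn}$ of the convex hull of $\Lambda(\Gamma)$ meets $\partial\Hn$ exactly in $\Lambda(\Gamma)$. Points of $\Hn$ near $\xi$ therefore lie outside the hull, and their projections lie outside $C$.
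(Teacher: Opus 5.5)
Your proposal is correct and follows the paper's proof. The elementary case goes through amenability ($H^2_b(M)=0$ while $Z\Omega^2_b(M)\neq 0$); in the non-elementary case you take the differential of a compactly supported bump $1$-form in $M\setminus C$ and apply Proposition~\ref{convexcore}, additionally justifying two points the paper leaves implicit, namely $Z\Omega^2_b(M)\neq 0$ and $C\neq M$ when $\Lambda(\Gamma)\neq\partial\Hn$.
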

\begin{proof}
Let $M=\Hn/\Gamma$.
If $\Gamma$ is elementary,
then it is amenable, and $H^2_b(\Gamma)=0$, so $\psi$ is not injective; otherwise, since $M$ is of the second kind, the complement 
$U=M\setminus C$ of the convex core
 is non-empty and open,
and we can easily construct a non-trivial element of $Z\Omega^2_b(M)$ supported in $U$ (just take, for example, the differential
of a bump differential $1$-form  with compact support contained in $U$). Thus
the restriction map $\text{res}\colon Z\Omega^2_b (M)\to \Omega^2_b (C)$ has a non-trivial kernel, and the conclusion follows from Proposition~\ref{convexcore}.
\end{proof}

\section{Boundary representations of bounded classes}\label{boundary:sec}
The bounded class $\psi(\omega)$ corresponding to a bounded  differential $2$-form $\omega\in Z\Omega^2_b(M)$ 
can also be described by integration on ideal triangles with vertices on $\partial \Hn$, rather than on straight triangles with vertices inside $\Hn$.
In order to make this statement precise, one needs to switch from the bounded cohomology of $M$ to the bounded cohomology of its fundamental group.

\subsection{Bounded cohomology of groups}\label{group:sub}
Let $G$  be a discrete group.
The \emph{bound\-ed cohomology} of $G$ (with real coefficients), denoted by $H^\bullet_b(G)$, is defined as the cohomology of the following complex of vector spaces
\[
	0 \rightarrow C^0_{b}(G)^G \rightarrow C^1_{b}(G)^G\rightarrow C^2_{b}(G)^G \rightarrow \cdots\,,
\]
where $C^n_{b}(G)^G$ denotes the space of bounded $G$-invariant maps from $G^{n+1}$ to $\R$, the differential maps are defined by $$(\delta \varphi)(\gamma_0,\dots,\gamma_n) = \sum_{i=0}^n(-1)^i\varphi(\dots,\widehat{\gamma_i},\dots)\,,$$
and the action of $G$ on $C^n_b(G)$ is given by $(\gamma\cdot \varphi)(\gamma_0,\dots,\gamma_n)=\varphi(\gamma^{-1}\gamma_0,\dots, \gamma^{-1}\gamma_n)$.

A celebrated theorem due to Gromov ensures that the bounded cohomology of a topological space is isometrically isomorphic to the bounded cohomology of its fundamental group.
In fact, when the topological space supports a straightening procedure, Gromov's isomorphism admits a very explicit description, which we recall
here (in degree 2) in the case of hyperbolic manifolds. 
Let $M=\Hn/\Gamma$, let ${p}\in\Hn$ be a fixed basepoint, and 
for every $(\gamma_0,\gamma_1,\gamma_2)\in\Gamma^3$
let $T(\gamma_0 p,\gamma_1 p,\gamma_2 p)$ denote the straight triangle of $\Hn$ with vertices $\gamma_0 p,\gamma_1 p,\gamma_2 p$ (recall that straight simplices
are singular simplices, hence $T(\gamma_0 p,\gamma_1 p,\gamma_2 p)$ is a parametrizazion of the geodesic triangle with vertices  $\gamma_0 p,\gamma_1 p,\gamma_2 p$, i.e.~of the hyperbolic
convex hull of the points $\gamma_0 p,\gamma_1 p,\gamma_2 p$).
We then consider the map $\theta \colon C^2_b(M) \rightarrow C^2_b(\Gamma)^\Gamma$ defined by 
\[(\theta(\varphi))(\gamma_0,\gamma_1,\gamma_2)=\tilde{\varphi}(T(\gamma_0 p,\gamma_1 p,\gamma_2 p)),\]
where, for every singular cochain $\varphi\in C^2_b(M)$, we denote by
$\tilde{\varphi} \in C^2_b(\Hn)^\Gamma$ the $\Gamma$-invariant lift of $\varphi$.
It is well known that $\theta$ induces an isomorphism in bounded cohomology (see e.g.~\cite[Corollary 4.15]{Frigeriobook}), which will still be denoted by $\theta$, with
a slight abuse. By construction,
if $\omega\in Z\Omega^2_b(M)$ and $\widetilde{\omega}\in Z\Omega^2_b(\Hn)$ is the pull-back of $\omega$ to $\Hn$, then $\theta(\psi(\omega))$ is represented by the cocycle
$$
c_{\omega,{p}}\in C^2_b(\Gamma)^\Gamma\, ,\qquad c_{\omega,{p}}(\gamma_0,\gamma_1,\gamma_2)=\int_{T(\gamma_0 {p}, \gamma_1 {p}, \gamma_2 {p})}\tilde \omega\ .
$$

A key step in our approach to the study of $\psi$ consists in representing the class $\theta(\psi(\omega))$ ``on the boundary'' of $\Hn$.
To this aim, we denote by $\calT$ the set of elements $(\xi_0,\xi_1,\xi_2)\in (\bi\Hn)^3$ such that $\xi_i\neq \xi_j$ for $i\neq j$. 
Thus, $\mathcal{T}$ parametrizes the set of ideal triangles
with ordered vertices in $\mathbb{H}^n$. For every $(\xi_0,\xi_1,\xi_2)\in \mathcal{T}$, we denote by  of $T(\xi_0,\xi_1,\xi_2)$ an
\emph{oriented} parametrization  of the ideal triangle $T$ with vertices $\xi_0,\xi_1,\xi_2$, i.e.~the restriction to $\Delta^2\setminus\{v_0,v_1,v_2\}$ of a parametrization $\sigma\colon \Delta^2 \to \overline{T}\subseteq \overline{\mathbb{H}^n}$ 
sending $v_i$ to $\xi_i$ for $i=0,1,2$, where $v_0,v_1,v_2$ are the vertices of $\Delta^2$.

Let now $\xi\in\bi\Hn$ be a fixed basepoint. We consider the cochain
$c_{\omega,\xi}\in C^2_b(\Gamma)^\Gamma$ defined  by
$$
c_{\omega,\xi}(\gamma_0,\gamma_1,\gamma_2)=\left\{\begin{array}{ll} \int_{T(\gamma_0 {\xi}, \gamma_1 {\xi}, \gamma_2 {\xi})}\tilde \omega \quad & \text{if}\ 
(\gamma_0 {\xi}, \gamma_1 {\xi}, \gamma_2 {\xi})\in\mathcal{T}\\
0 \quad &\text{otherwise}\ .\end{array}\right.
$$

The following result was proved by Barge and Ghys when  $n=2$, but their proof
applies \emph{verbatim} also to higher dimensions.

\begin{lemma}[{\cite[Lemma 3.10]{bargeghys}}]\label{boundaryrep:lemma}
For every $\xi\in\bi\Hn$, the cochain $c_{\omega,\xi}$ is a bounded cocycle representing
the class $\theta(\psi(\omega))\in H^2_b(\Gamma)$.
\end{lemma}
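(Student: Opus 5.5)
We prove Lemma~\ref{boundaryrep:lemma} by comparing $c_{\omega,\xi}$ with $c_{\omega,p}$ through an explicit bounded cochain, letting the vertex slide from $p$ to $\xi$. Throughout we use three facts about $\widetilde{\omega}$. It is closed. It is bounded, with $\|\widetilde{\omega}\|_\infty=\|\omega\|_\infty$. Every ideal or partially ideal geodesic triangle has area at most $\pi$, so the integral of $\widetilde{\omega}$ over it is well defined, absolutely convergent, and bounded by $\pi\|\omega\|_\infty$.

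\textbf{Boundedness and invariance.} Boundedness of $c_{\omega,\xi}$ is immediate from the area bound. $\Gamma$-invariance follows from $\gamma^*\widetilde{\omega}=\widetilde{\omega}$ together with the fact that isometries carry oriented ideal triangles to oriented ideal triangles.

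\textbf{Cocycle property.} For ideal points $\xi_0,\dots,\xi_3$ that are pairwise distinct, the alternating sum of the four ideal triangles is the oriented boundary of the ideal tetrahedron they span. To apply Stokes' theorem I would truncate: replace the ideal vertices by points $x_i^t$ on geodesic rays tending to $\xi_i$. For the finite tetrahedron, closedness of $\widetilde{\omega}$ and Stokes give that the alternating sum of the integrals over the faces vanishes. As $t\to\infty$, each finite face integral converges to the corresponding ideal face integral. This is the convergence lemma: the symmetric difference between the finite triangle with vertices $x_i^t$ and the ideal triangle has area tending to $0$, because the geodesics $[x_i^t,x_j^t]$ converge to $(\xi_i,\xi_j)$ uniformly on compacta and the thin cusp regions near the vertices have small area. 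Boundedness of $\widetilde{\omega}$ then gives convergence of the integrals.

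When some of the $\gamma_i\xi$ coincide, a case check is needed using the convention that degenerate triangles get value $0$. If exactly two points coincide, say $\gamma_0\xi=\gamma_1\xi$, then in $\delta c$ the terms omitting $\gamma_2$ and $\gamma_3$ vanish, and the terms omitting $\gamma_0$ and $\gamma_1$ coincide and appear with opposite signs, so $\delta c=0$. The cases with more coincidences are similar.

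\textbf{Cohomologous to $c_{\omega,p}$.} I would use the intermediate cochains $c^{(t)}$ built from vertices $\gamma_i x_t$, where $x_t$ runs along the ray from $p$ to $\xi$. Each $c^{(t)}$ is cohomologous to $c_{\omega,p}$ via the bounded primitive
$$
b_t(\gamma_0,\gamma_1)=\int_{P(\gamma_0 p,\gamma_0x_t,\gamma_1x_t,\gamma_1p)}\widetilde{\omega},
$$
the integral over a geodesically coned quadrilateral. However, $b_t$ has area bounded by $2\pi$ only for each fixed $t$, so the limit argument requires care. It is cleaner to go directly to the limit. Define $b(\gamma_0,\gamma_1)$ as the integral of $\widetilde{\omega}$ over the sum of the two straight triangles $(\gamma_0p,\gamma_0\xi,\gamma_1\xi)$ and $(\gamma_0p,\gamma_1\xi,\gamma_1p)$, which have one or two ideal vertices. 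Then $|b|\le 2\pi\|\omega\|_\infty$ and $b$ is $\Gamma$-invariant. The identity $c_{\omega,\xi}-c_{\omega,p}=\delta b$ follows, one triple at a time, from Stokes' theorem applied to the prism decomposition of the prism with top $(\gamma_i\xi)$ and bottom $(\gamma_ip)$ into three (partially ideal) tetrahedra. Stokes is again justified by the truncation-and-limit argument above, and degenerate configurations with coinciding $\gamma_i\xi$ are handled by the same conventions.

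\textbf{Main obstacle.} The main difficulty is this justification of Stokes' theorem on non-compact simplices with ideal vertices. It reduces to the uniform control: the area near an ideal vertex of a (partially) ideal triangle, outside a horoball neighbourhood shrinking to the vertex, tends to $0$.
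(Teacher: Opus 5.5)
Your proposal is correct and follows essentially the Barge--Ghys argument that the paper cites. The bounded primitive is the integral of $\widetilde{\omega}$ over the quadrilateral $\gamma_0p,\gamma_0\xi,\gamma_1\xi,\gamma_1p$, split into two triangles of total area at most $2\pi$. The prism identity $c_{\omega,\xi}-c_{\omega,p}=\delta b$ holds with your signs, and also on degenerate triples once triangles with a repeated ideal vertex are given the value $0$; since $c_{\omega,p}$ is a cocycle, this makes your separate cocycle check redundant. The one step to tighten is the truncation limit for $n\geq 3$: there the truncated faces do not lie in the planes of the ideal faces, so instead of a \emph{symmetric difference} argument you need $C^1$-convergence of parametrizations on compacta together with uniform smallness of area near the vertices, as in the proof of Proposition~\ref{properties:c}.
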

\qed

\subsection{Bounded cocycles as measurable functions}\label{sec: bc as meas fun}

We are now going to exploit some fundamental results concerning the use of amenable actions
for the computation of bounded cohomology.

\begin{defn}
Let $G$ be a group. Following
\cite[Definiton 2.1.1]{Monod}, we say that a \emph{regular $G$-space} is a standard Borel space $S$ on which $G$ acts measurably, together with a $G$-invariant measure
 class with the following property: the measure class contains a probability measure $\mu$ turning $(S,\mu)$ into a standard probability space such that the natural isometric
 $G$ action on $L^1(\mu)$ defined by
  $$
 (g\cdot f)(s)=f(g^{-1}\cdot s)\frac{dg^{-1}\mu}{d\mu}(s)\, ,\qquad f\in L^1(\mu)\, ,\ s\in S
 $$
 is continuous.
 \end{defn}

Recall that a discrete group $G$ is  \textit{amenable} if it admits a left-invariant mean on the space $\ell^\infty(G)$ of real-valued bounded functions
on $G$.
Basic examples of amenable groups are finite groups, abelian groups and extensions of amenable groups by amenable groups (which include solvable and nilpotent groups). A classic result of Trauber is that amenable groups have vanishing bounded cohomology.
The definition of amenability for discrete groups can be extended to the locally compact case and, more in general, to $G$-spaces (see \cite[Chapter 4]{zimmer} and \cite[Chapter II, Definition 5.3.1 and Theorem 5.3.2]{Monod}). If $G$ is itself amenable, then any regular $G$-space  also is, but non-amenable groups may well act amenably on regular spaces.
The fundamental r\^ole of amenable actions in the theory of bounded cohomology is described by Theorem~\ref{amenable_resolution} below, which states that,
if  $S$ is an amenable regular $G$-space, then the bounded cohomology of $G$ is computed by the cochain complex 
\begin{equation}\label{resolution}
	0\rightarrow L^\infty_{\rm alt}(S)^G\rightarrow  L^\infty_{\rm alt}(S^2)^G\rightarrow L^\infty_{\rm alt}(S^3)^G\rightarrow\cdots 
\end{equation}
of $G$-invariant alternating measurable bounded functions on $S^{n+1}$ up to equality almost everywhere, endowed with the usual differential

\[(\delta \varphi)(x_0,\dots,x_n) = \sum_{i=0}^n(-1)^i \varphi(\dots,\widehat{x_i},\dots)\ .\]
Recall that a cochain $\varphi\colon S^{n+1}\to\mathbb{R}$ is \emph{alternating} if $$\varphi(x_{\tau(0)},\dots,x_{\tau(n)})=\varepsilon(\sigma)\varphi(x_0,\dots,x_n)$$ for every
permutation $\tau$ of $\{0,\dots,n\}$, where $\varepsilon(\tau)$ is the sign of $\tau$.

\begin{thm}[{\cite[Theorem 2]{burger2001continuous}\cite[Theorem 7.5.3]{Monod}}]\label{amenable_resolution}
	Let $G$ be a discrete group and $S$ be an amenable regular $G$-space. Then, the cohomology of the complex~\eqref{resolution}
	is canonically isomorphic to $H^\bullet_b(G)$. 
\end{thm}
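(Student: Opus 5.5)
The plan is the standard homological-algebra comparison: show that $L^\infty(S^{\bullet+1})$ is a \emph{strong} resolution of the trivial module $\mathbb{R}$ by \emph{relatively injective} Banach $G$-modules. Then compare it with the standard resolution $\ell^\infty(G^{\bullet+1})$, whose $G$-invariants compute $H^\bullet_b(G)$ by the definition in Subsection~\ref{group:sub}. Throughout I fix the probability measure $\mu$ in the measure class given by the definition of regular $G$-space. I regard $L^\infty(S^{n+1})$ as the dual of $L^1(S^{n+1},\mu^{\otimes(n+1)})$, with the dual (coadjoint) $G$-action. The continuity hypothesis on the $L^1$ action is exactly what makes this a well-behaved dual Banach $G$-module.

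First, \textbf{strongness}. Use the augmentation $\mathbb{R}\to L^\infty(S)$ by constants and the homotopy
$$h_n f(x_0,\dots,x_{n-1})=\int_S f(s,x_0,\dots,x_{n-1})\,d\mu(s).$$
By Fubini, $h_n$ is well defined on classes of functions agreeing almost everywhere, has norm at most $1$, and satisfies $\delta h+h\delta=\mathrm{id}$. So the augmented complex is exact with a norm-$\le 1$ contracting homotopy. The homotopy is not $G$-equivariant, which is all that strongness requires. The same computation gives strongness for $\ell^\infty(G^{\bullet+1})$, using evaluation at the identity.

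Second, \textbf{relative injectivity}, which I expect to be the main obstacle. Write $L^\infty(S^{n+1})\cong L^\infty_{w*}\bigl(S,L^\infty(S^n)\bigr)$. By Zimmer's characterization, amenability of the $G$-space $S$ is equivalent to the existence of a $G$-equivariant conditional expectation $L^\infty(G\times S)\to L^\infty(S)$, a ``mean'' taken fibrewise. Using this mean, I would prove that $L^\infty_{w*}(S,E)$ is relatively injective for every coefficient module $E$ that is a dual Banach module. Concretely, to extend a $G$-map across an injective strongly split morphism $A\to B$:
\begin{itemize}
\item first extend non-equivariantly, using the splitting;
\item then average the resulting map over $G$ with the equivariant mean.
\end{itemize}
The technical issues are weak-$*$ measurability of the maps $S\to E$ involved and the fact that averaging preserves the norm bound. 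This is where the regularity of $S$ (standard Borel, continuous $L^1$ action) and the choice of the dual structure enter. The modules $\ell^\infty(G^{n+1})=\ell^\infty(G,\ell^\infty(G^n))$ are relatively injective for the easier reason that they are induced modules.

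Third, apply the \textbf{fundamental lemma} of relative homological algebra: any morphism of coefficients extends to a $G$-chain map from a strong resolution into a complex of relatively injective modules, and any two such extensions are $G$-homotopic. Applying it in both directions between $\ell^\infty(G^{\bullet+1})$ and $L^\infty(S^{\bullet+1})$, extending the identity of $\mathbb{R}$, gives mutually inverse maps up to $G$-homotopy. Taking $G$-invariants then yields a canonical isomorphism $H^\bullet\bigl(L^\infty(S^{\bullet+1})^G\bigr)\cong H^\bullet_b(G)$. Finally, to pass to the alternating subcomplex~\eqref{resolution}, use the alternation operator
$$\mathrm{Alt}\,f=\frac{1}{(n+1)!}\sum_\tau\varepsilon(\tau)\,f\circ\tau.$$
It is a $G$-equivariant chain map that is the identity on alternating cochains. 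It is also an extension of the identity of $\mathbb{R}$, hence $G$-homotopic to the identity by the same uniqueness. Therefore the inclusion of alternating cochains induces an isomorphism in cohomology.
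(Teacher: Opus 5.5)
Your outline is correct and is essentially the Burger--Monod proof that the paper cites without reproducing. Its steps are the same: strongness of $L^\infty(S^{\bullet+1})$ via the integration homotopy, relative injectivity of $L^\infty_{w*}(S,E)$ by averaging with a $G$-equivariant conditional expectation $L^\infty(G\times S)\to L^\infty(S)$, the fundamental lemma comparing with $\ell^\infty(G^{\bullet+1})$, and the alternation operator to pass to the alternating subcomplex. The one step you leave as a sketch is extending that conditional expectation to weak-$*$ measurable $E$-valued maps, which uses separability of the predual $L^1(S^n)$; this is the standard technical lemma in Monod's monograph, and your averaging formula is the one used there.
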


\begin{rem}
The above theorem works in the more general framework of \textit{continuous bounded cohomology} of a locally compact, second countable group and for arbitrary coefficient modules. In what follows, only bounded cohomology of discrete groups with real coefficients will be considered and thus it is sufficient to state the result in this less general form.
\end{rem}

	The elements of $L^\infty_{\rm alt}\left(S^{n+1} \right)$ are not functions, but \emph{classes} of functions.
	This being said, as it is customary we will usually denote the class of an element in $ L^\infty_{\rm alt}\left(S^{n+1} \right)^G$ simply by one of its representatives, thus writing  $ c\in L^\infty_{\rm alt}\left(S^{n+1} \right)^G$
	also when $c$ is a function.
	Notice that such a $c$ is not necessarily $G$-invariant as a function, even if its class is, and if $c$ is a cocycle, then the cocycle condition $\delta c = 0$ only holds almost everywhere in general. For later purposes, we need to single out the cases in which we can work with functions rather than with classes of functions. To this aim, we give the following:

\begin{defn}
We denote by $\mathcal{L}^\infty_{\text{alt}}(S^{\bullet+1})^G$  
 the set of bounded measurable alternating functions 
$S^{\bullet+1}\to\mathbb{R}$ which are genuinely (i.e.~not only almost everywhere) $G$-invariant. 

A \emph{strict} cocycle is an element $c\in \mathcal{L}^\infty_{\text{alt}}(S^{\bullet+1})^G$ such that
the identity $\delta c=0$ holds everywhere on $S^{\bullet+2}$ (in this case we write $c\in Z\mathcal{L}^\infty_{\text{alt}}(S^{\bullet+1})^G$). 
Any strict cocycle $c\in Z\mathcal{L}^\infty_{\text{alt}}(S^{n+1})^G$ defines a
cocycle in $L^\infty_{\rm alt}(S^{n+1})^G$, hence, if $S$ is an amenable regular $G$-space, it represents an element of $H_b^n(G)$.
\end{defn}

Even when $S$ is an amenable regular $G$-space, it is not clear whether the complex $\mathcal{L}^\infty_{\text{alt}}(S^{\bullet+1})^G$ computes the bounded cohomology of $G$:
as observed in~\cite[Section 2.7]{BurgerIozzi2}, the resolution $\mathcal{L}^\infty_{\text{alt}}(S^{\bullet+1})$ is strong, but we don't know whether the $G$-modules 
$\mathcal{L}^\infty_{\text{alt}}(S^{\bullet+1})$ are relatively injective. However, strict cocycles may prove very useful when looking for explicit representatives of classes in $H^\bullet_b(\Gamma)$
(see again~\cite{BurgerIozzi} and~\cite{BurgerIozzi2} for a thorough discussion of this issue).

We will be mainly interested in the case when $S=\partial\mathbb{H}^n$
and $G=\Gamma$ is a discrete subgroup of $\text{Isom}^+(\Hn)$. 
We endow $\bi\Hn$ with the measure class $\mu_L$ of the Liouville measure
(which coincides with the measure class of the Lebesgue measure on $S^{n-1}=\partial D^n$, after identifying
$\Hn$ with the Poincar\'e disc model $D^n$, and with the measure class of the  visual measure associated to any basepoint in $\Hn$).

It is well known (and very easy to check using e.g.~that every element of $\Gamma$ acts as a diffeomorphism of $\partial \Hn$) 
that $(\partial\Hn,\mu_L)$ is indeed a regular $\Gamma$-space.
In fact, it is also amenable:

\begin{lemma}\label{amenability}
Let $\Gamma$ be a discrete subgroup of $\operatorname{Isom}^+(\Hn)$. Then $(\partial\Hn,\mu_L)$ is an amenable regular $\Gamma$-space.
\end{lemma}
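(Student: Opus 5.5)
The plan is to reduce amenability of the $\Gamma$-action to amenability of the full isometry group acting on the boundary, and then pass to the closed subgroup $\Gamma$.

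\textbf{Step 1: a transitive model.} Set $G=\operatorname{Isom}^+(\Hn)\cong SO^+(n,1)$, a connected semisimple Lie group of real rank one. Fix $\xi\in\bi\Hn$. Then $G$ acts transitively on $\bi\Hn$, and the stabilizer $P=G_\xi$ is a minimal parabolic subgroup. So $\bi\Hn\cong G/P$ as smooth $G$-spaces. Under this identification, the Liouville measure class corresponds to the unique $G$-invariant smooth measure class on $G/P$ (the Lebesgue class).

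\textbf{Step 2: the stabilizer is amenable.} In the upper half-space model with $\xi=\infty$, $P$ is the group of similarities of $\mathbb{R}^{n-1}$, namely $P\cong(\mathbb{R}_{>0}\times SO(n-1))\ltimes\mathbb{R}^{n-1}$. This is an extension of a compact-by-abelian group by an abelian group, so it is amenable as a locally compact group.

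\textbf{Step 3: from $G$ to $G/P$.} By Zimmer's criterion, if $H<G$ is a closed amenable subgroup of a locally compact second countable group, then $G/H$ with its invariant measure class is an amenable $G$-space; see \cite[Chapter 4]{zimmer} and \cite[Chapter II, Section 5.3]{Monod}. Applied to $H=P$, this shows that $(\bi\Hn,\mu_L)$ is an amenable $G$-space.

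\textbf{Step 4: restrict to $\Gamma$.} Amenability of an action passes to closed subgroups: if $S$ is an amenable $G$-space and $\Gamma<G$ is closed, then $S$ is an amenable $\Gamma$-space. This is again standard in \cite{zimmer} (Zimmer's Theorem 4.3.5) and \cite{Monod}. A discrete subgroup is closed, so this gives the claim.

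Regularity of $(\bi\Hn,\mu_L)$ was already observed just before the lemma. Hence $(\bi\Hn,\mu_L)$ is an amenable regular $\Gamma$-space.

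\textbf{Main obstacle.} The only delicate point is making sure the cited notions of amenability agree. Zimmer's notion for $G$-spaces has to match the one in \cite[Definition 5.3.1]{Monod}, which Theorem~\ref{amenable_resolution} uses. This is covered by \cite[Theorem 5.3.2]{Monod}, which gives equivalent characterizations including Zimmer's.

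If one prefers a hands-on argument, there is a direct route. Amenability of $G/P$ means the existence of a $G$-equivariant conditional expectation $L^\infty(G)\to L^\infty(G/P)$. One builds it by averaging along the fibres $gP$ with an invariant mean on $L^\infty(P)$, using a measurable section of $G\to G/P$. Restricting this to $\Gamma$ is immediate, since equivariance for $G$ implies equivariance for $\Gamma$. I would cite the standard results rather than reprove this.
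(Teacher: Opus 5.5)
Your proposal is correct and is essentially the paper's proof: identify $\bi\Hn$ with $G/P$ (carrying its unique $G$-invariant measure class, which is the Liouville class), use the amenability of $P$ and \cite[Proposition 4.3.2]{zimmer} to get an amenable $G$-space, and then restrict to the closed subgroup $\Gamma$. For this last step the paper cites \cite[Proposition 4.5.3]{zimmer} or \cite[Lemma 5.4.3]{Monod}.

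One minor point concerns your optional hands-on aside. Restricting a $G$-equivariant expectation $L^\infty(G)\to L^\infty(G/P)$ to $\Gamma$ does not by itself give amenability of the $\Gamma$-action, which calls for an expectation on $L^\infty(\Gamma\times\bi\Hn)$; producing one needs a short transfer via a Borel section of $G\to\Gamma\backslash G$. Since your actual argument relies on the cited restriction lemma, this does not affect the proof.
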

\begin{proof}
The group $G=\text{Isom}^+(\Hn)$ is a semisimple Lie group.
Since the action of $G$ on $\partial \Hn$ is transitive, if 
$P=\operatorname{Stab}(x_0)<G$ for some $x_0\in\partial \Hn$, then 
we have an identification 
$\partial \Hn\cong G/P$ as $G$-spaces. Moreover, $G/P$ admits a unique invariant measure class \cite[Example 2.1.2]{zimmer}, which, therefore, coincides with $\mu_L$ under the identification $\partial \Hn\cong G/P$. Since $P$ is amenable,  \cite[Proposition 4.3.2]{zimmer} then implies that $\partial \Hn$ is an amenable $G$-space. Finally, since $\Gamma$ is discrete and thus closed
in $G$, the conclusion follows from the fact  that restrictions of amenable actions to closed subgroups are amenable
(see~\cite[Proposition 4.5.3]{zimmer} or~\cite[Lemma 5.4.3]{Monod}).
\end{proof}

Putting together Theorem~\ref{amenable_resolution} and Lemma~\ref{amenability} we obtain the following:
\begin{cor}\label{realization:cor}
The bounded cohomology of $\Gamma$ is isometrically isomorphic to the cohomology of the complex
\begin{equation}\label{resolution2}
	0\rightarrow L^\infty_{\rm alt}(\partial \Hn)^\Gamma\rightarrow  L^\infty_{\rm alt}((\partial \Hn)^2)^\Gamma\rightarrow L^\infty_{\rm alt}((\partial \Hn)^3)^\Gamma\rightarrow\cdots 
\end{equation}
\end{cor}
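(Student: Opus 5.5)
This is a direct combination of Theorem~\ref{amenable_resolution} and Lemma~\ref{amenability}. Lemma~\ref{amenability} says that $(\partial\Hn,\mu_L)$ is an amenable regular $\Gamma$-space whenever $\Gamma$ is a discrete subgroup of $\operatorname{Isom}^+(\Hn)$. Here $\Gamma$ is the fundamental group of the complete hyperbolic manifold $M$, viewed as a discrete subgroup acting on $\Hn$. Setting $S=\partial\Hn$ and $G=\Gamma$ in Theorem~\ref{amenable_resolution}, the complex~\eqref{resolution} becomes exactly the complex~\eqref{resolution2}, and the theorem gives a canonical isomorphism between its cohomology and $H^\bullet_b(\Gamma)$.

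There are two small points to check. The first is orientation. Lemma~\ref{amenability} is stated for subgroups of $\operatorname{Isom}^+(\Hn)$, so if $M$ is non-orientable one has to handle $\Gamma<\operatorname{Isom}(\Hn)$. The proof of Lemma~\ref{amenability} adapts verbatim:
\begin{itemize}
\item $\operatorname{Isom}(\Hn)$ still acts transitively on $\partial\Hn$;
\item the stabilizer of a point is still amenable, being an extension of the stabilizer in $\operatorname{Isom}^+(\Hn)$ by a group of order at most two;
\item $\Gamma$ is closed in $\operatorname{Isom}(\Hn)$, and restriction of an amenable action to a closed subgroup remains amenable.
\end{itemize}
Alternatively, one can restrict attention to the orientable case, as the paper implicitly does.

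The second point is the word \emph{isometrically}. Theorem~\ref{amenable_resolution}, as quoted, only provides a canonical isomorphism. The isometric statement comes from the full version of Burger--Monod's theorem \cite[Theorem 7.5.3]{Monod}: for an amenable regular $G$-space, the complex of $L^\infty$ alternating cochains computes $H^\bullet_b(G)$ isometrically with respect to the quotient seminorm induced by the $L^\infty$ norm. I would cite that refinement directly, noting that passing to alternating cochains does not change the seminorm, since alternation is a norm non-increasing chain map homotopic to the identity. Tracking this norm statement through the references is the only step that is more than routine. It is also not needed for the injectivity argument that follows, which uses only the algebraic isomorphism.
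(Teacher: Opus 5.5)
Your proposal is correct and matches the paper, which obtains the corollary directly from Theorem~\ref{amenable_resolution} and Lemma~\ref{amenability} with no further argument. Your two supplementary remarks are also sound and address points the paper leaves implicit. The first is extending Lemma~\ref{amenability} to discrete $\Gamma<\isom(\Hn)$; this, rather than restricting to the orientable case, is what Theorem~\ref{main:thm} needs in order to cover non-orientable $M$. The second is that the \emph{isometric} claim comes from the full Burger--Monod theorem, not from the version quoted as Theorem~\ref{amenable_resolution}.
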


Let us now focus on the situation we are interested in, i.e.~let us take ${\omega}\in Z\Omega^2_b(M)$, where $M=\Hn/\Gamma$, and
let $\widetilde{\omega}\in Z\Omega^2_b (\Hn)$ be the pull-back of
$\omega$ on $\Hn$. Recall that
$\calT$ denotes the set of elements $(\xi_0,\xi_1,\xi_2)\in (\bi\Hn)^3$ such that $\xi_i\neq \xi_j$ for $i\neq j$. 
We consider the 
map $c_\omega\colon (\bi\Hn)^3\to \mathbb{R}$
given by
$$
c_{\omega}(\xi_0,\xi_1,\xi_2)=\left\{\begin{array}{ll} \int_{T(\xi_0,\xi_1,\xi_2)} \widetilde{\omega}\qquad &\text{if}\ (\xi_0,\xi_1,\xi_2)\in\mathcal{T}\ ,\\
0 \qquad & \text{otherwise.}\end{array}\right.
$$

\begin{prop}\label{properties:c}
We have $c_\omega\in Z\mathcal{L}^\infty_{\text{alt}}((\bi\Hn)^{3})^\Gamma$, i.e.~$c_\omega$ is a strict cocycle. Moreover, its restriction to $\mathcal{T}$ is continuous.
\end{prop}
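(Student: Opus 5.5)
We have to check five things: $c_\omega$ is well defined and bounded, it is measurable (in fact continuous on $\mathcal{T}$), it is alternating, it is genuinely $\Gamma$-invariant, and $\delta c_\omega=0$ holds at every point of $(\bi\Hn)^4$.

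\emph{Well-definedness and boundedness.} For $(\xi_0,\xi_1,\xi_2)\in\mathcal{T}$, the ideal triangle $T(\xi_0,\xi_1,\xi_2)$ is a totally geodesic ideal triangle of area $\pi$. Since $\widetilde{\omega}$ is bounded, the integral converges absolutely and $|c_\omega|\le \pi\|\widetilde{\omega}\|_\infty$. The value does not depend on the chosen oriented parametrization: any two such parametrizations differ by an orientation-preserving reparametrization of the open triangle. Outside $\mathcal{T}$ we set $c_\omega=0$.

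\emph{Alternation and invariance.} Permuting the vertices reverses the orientation exactly when the permutation is odd, which gives the alternating property on $\mathcal{T}$; off $\mathcal{T}$ both sides vanish. For $\gamma\in\Gamma$ we have $\gamma T(\xi_0,\xi_1,\xi_2)=T(\gamma\xi_0,\gamma\xi_1,\gamma\xi_2)$ with orientation preserved, because $\Gamma\subset\operatorname{Isom}^+$. Since $\gamma^*\widetilde{\omega}=\widetilde{\omega}$, invariance holds everywhere. Note also that $\gamma$ preserves $\mathcal{T}$.

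\emph{Continuity on $\mathcal{T}$.} Fix $(\xi_i)\in\mathcal{T}$ and a nearby triple $(\xi_i')$. Choose $g_{\xi'}\in\operatorname{Isom}^+(\Hn)$ depending continuously on $\xi'$, with $g_{\xi'}\xi_i=\xi'_i$; this is possible because the stabilizer structure makes $\operatorname{Isom}^+(\Hn)\to\mathcal{T}$ a continuous fibration with local sections. Then
$$c_\omega(\xi')=\int_{T(\xi)} g_{\xi'}^*\widetilde{\omega}.$$
Now pull back to a fixed ideal triangle in $\Hp$ with area form $dA$. The integrand converges pointwise as $\xi'\to\xi$, since $\widetilde{\omega}$ is smooth and $g$ is continuous. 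It is dominated by $\|\widetilde{\omega}\|_\infty$ times the area density, because $g$ is an isometry, and the area of $T(\xi)$ is $\pi$, which is integrable. Dominated convergence then gives continuity. Measurability on $(\bi\Hn)^3$ follows, since $\mathcal{T}$ is open and the complement is a closed set where $c_\omega=0$.

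\emph{Strict cocycle condition (main step).} Take $(\xi_0,\dots,\xi_3)$.

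If all four points are distinct, we need
$$\sum_i(-1)^i\int_{T(\hat\xi_i)}\widetilde{\omega}=0.$$
I would prove this with Stokes on the ideal tetrahedron. Pick a point $p\in\Hn$ and cone: the signed sum of the four triangle faces is the boundary of the 3-chain given by the straight ideal simplex $\sigma(\xi_0,\dots,\xi_3)$, and $d\widetilde{\omega}=0$. The difficulty is that the tetrahedron is non-compact, so Stokes cannot be applied directly. To handle this, truncate. Apply Stokes to the straight simplex with vertices $x_i^t$, where $x_i^t\to\xi_i$ along geodesic rays. This is fine because straightening is a chain map, and for finite simplices the alternating sum of the face integrals is $0$. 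Then let $t\to\infty$: each face integral converges to the ideal face integral. This convergence follows again from boundedness of $\widetilde{\omega}$ together with dominated convergence, provided the straight triangles with vertices $x_i^t$ converge to the ideal triangle in a way that makes the area densities converge and stay dominated. The cleanest way to arrange this is to parametrize all the triangles through barycentric/Klein-model coordinates, where geodesic simplices are affine. Equivalently, one can use the Barge–Ghys argument from Lemma~\ref{boundaryrep:lemma}.

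If the points are not all distinct, say $\xi_j=\xi_k$, then every face containing both $\xi_j$ and $\xi_k$ is degenerate and contributes $0$. The two faces omitting $\xi_j$ or $\xi_k$ are the same ideal triangle with vertex orders differing by a cyclic shift of the relevant type. They appear with opposite signs $(-1)^j$ and $(-1)^k$ after using the alternating property, so they cancel. If three or more points coincide, all terms vanish.

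The main obstacle I expect is the limiting argument in the four-distinct-points case, namely justifying Stokes for ideal tetrahedra via uniform domination of the area densities.
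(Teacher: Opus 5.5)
Your proposal is correct and follows the same route as the paper: the area bound $\pi$, invariance and alternation by construction, Stokes' Theorem for the cocycle identity, and dominated convergence for continuity on $\mathcal{T}$. It only adds detail: your continuous local section $\xi'\mapsto g_{\xi'}$ concretely produces the $C^1$-convergent parametrizations the paper invokes, and your truncation argument plus case analysis for coincident vertices spell out the Stokes step that the paper calls immediate.

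The domination you flag as the main obstacle does hold in Klein coordinates. The hyperbolic area element on an affine plane section of the Klein ball is at most $(1-|x|^2)^{-3/2}$ times the Euclidean one, and $1-\bigl|\sum_i\lambda_i x_i^t\bigr|^2\geq\frac12\sum_{i,j}\lambda_i\lambda_j|x_i^t-x_j^t|^2$; together these give the uniform integrable majorant $C\bigl(\sum_{i<j}\lambda_i\lambda_j\bigr)^{-3/2}$ on $\Delta^2$.
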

\begin{proof}
Since the area of ideal triangles is equal to $\pi$, we have 
$\|c_\omega\|_\infty\leq \pi \|\omega\|_\infty$, hence $c_\omega$ is well defined (as a set-theoretic function) and bounded; moreover, the $\Gamma$-invariance of $\widetilde{\omega}$  implies the $\Gamma$-invariance of $c_\omega$, and $c_\omega$ is alternating by construction.  The cocycle identity
$\delta c_\omega=0$ readily follows from the fact that $\widetilde{\omega}$ is closed, together with Stokes' Theorem, hence
we are left to show that  $c_\omega$ is continuous on $\mathcal{T}$ (which implies in turn that $c_\omega$ is  measurable
on $(\partial \mathbb{H}^n)^3$).

Let $(\overline{\xi}_0, \overline{\xi}_1,\overline{\xi}_2)$ be an element of $\mathcal{T}$, and let
$({\xi}_0^n, {\xi}_1^n,{\xi}_2^n)$ be a sequence in $\mathcal{T}$ converging to $(\overline{\xi}_0, \overline{\xi}_1,\overline{\xi}_2)$. 
Using that $\overline{\xi}_i\neq \overline{\xi}_j$ for $i\neq j$, it is easy to prove that we may choose parametrizations
of $T({\xi}_0^n, {\xi}_1^n,{\xi}_2^n)$ which converge to a parametrization of $T(\overline{\xi}_0, \overline{\xi}_1,\overline{\xi}_2)$ in the $C^1$-topology. 
Since $\widetilde{\omega}$ is bounded, we may then apply the Dominated Convergence Theorem to show that
$$
\lim_{n\to \infty} c_\omega ({\xi}_0^n, {\xi}_1^n,{\xi}_2^n)=\lim_{n\to \infty} \int_{T({\xi}_0^n, {\xi}_1^n,{\xi}_2^n)} \widetilde{\omega}
=\int_{T(\overline{\xi}_0, \overline{\xi}_1,\overline{\xi}_2)} \widetilde{\omega}=c_\omega(\overline{\xi}_0, \overline{\xi}_1,\overline{\xi}_2)\ .
$$
This shows that $c_\omega$ is continuous on $\mathcal{T}$ and conclude the proof.
\end{proof}

\begin{prop}\label{rep:boundary}
The strict cocycle $c_\omega$ represents the class $\theta(\psi(\omega))\in H^2_b(\Gamma)$.
\end{prop}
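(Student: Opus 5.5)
We need to show that the class of the strict cocycle $c_\omega$ under the isomorphism of Corollary~\ref{realization:cor} corresponds to $\theta(\psi(\omega))$, which Lemma~\ref{boundaryrep:lemma} represents by $c_{\omega,\xi}$. We therefore need an explicit comparison map between the boundary resolution and the standard resolution. The natural tool is the orbit map: for almost every $\xi\in\bi\Hn$ we pull back boundary cochains via $\gamma\mapsto\gamma\xi$. Formally, we view both complexes as strong resolutions of $\mathbb{R}$. The standard homogeneous resolution $\ell^\infty(\Gamma^{\bullet+1})$ is relatively injective, and $L^\infty(\bi\Hn^{\bullet+1})$ is relatively injective by amenability. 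By the fundamental lemma of homological algebra in bounded cohomology, any $\Gamma$-morphism of resolutions extending the identity on $\mathbb{R}$ induces the canonical isomorphism.

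The cleanest route is to work with the resolution of bounded Borel (genuine) functions $\mathcal{B}^\infty(\bi\Hn^{\bullet+1})$. By \cite{BurgerIozzi} (the results cited in \cite[Section 2]{BurgerIozzi2}), this is a strong resolution, and there are two natural chain maps out of it. The first is the inclusion into $L^\infty(\bi\Hn^{\bullet+1})$ (passing to classes). The second is evaluation on orbits: for fixed $\xi$, set $\varphi\mapsto \varphi_\xi(\gamma_0,\dots,\gamma_k)=\varphi(\gamma_0\xi,\dots,\gamma_k\xi)$, which lands in $\ell^\infty(\Gamma^{\bullet+1})$. Both maps are $\Gamma$-equivariant and extend the identity on constants. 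By the standard comparison principle, chain maps from a strong resolution into a relatively injective one, extending the identity, are unique up to homotopy. Hence both maps induce, on $H^\bullet_b$, the same map composed with the canonical isomorphisms, as in Burger--Iozzi's Proposition that a strict Borel cocycle on the boundary and its pullback to any orbit define the same class.

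Applying this to $c_\omega$, which is a strict Borel cocycle by Proposition~\ref{properties:c}, we get that its class in $L^\infty_{\rm alt}$ corresponds to the class of $(c_\omega)_\xi$ for every $\xi$. By definition $(c_\omega)_\xi=c_{\omega,\xi}$: the cases where $(\gamma_i\xi)$ is not in $\mathcal{T}$ coincide, with value $0$. Lemma~\ref{boundaryrep:lemma} then gives $[c_{\omega,\xi}]=\theta(\psi(\omega))$, which concludes the argument.

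The main obstacle is justifying the comparison step. Pullback along an orbit is not well defined on $L^\infty$ classes, since orbits are null sets; this is exactly why the strictness of $c_\omega$, as a genuine invariant Borel cocycle, is essential, and why we must invoke \cite[Proposition 2.1 / 3.1]{BurgerIozzi}-type results rather than the abstract Theorem~\ref{amenable_resolution}. The first thing to try is to quote Burger--Iozzi directly: for a strict measurable cocycle on a $\Gamma$-space that is amenable with quasi-invariant measure, the orbit pullback at any point represents the same class as the $L^\infty$ class. If that cannot be cited verbatim, the fallback is to construct an explicit equivariant chain map $L^\infty(\bi\Hn^{\bullet+1})\to\ell^\infty(\Gamma^{\bullet+1})$ by averaging against the harmonic measures $\nu_{\gamma p}$. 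Continuity of $c_\omega$ on $\mathcal{T}$ together with the boundedness of $\widetilde\omega$ would then let us check that this averaged cocycle differs from $c_{\omega,p}$ by a bounded coboundary.
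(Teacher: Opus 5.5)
Your proposal is correct and follows essentially the same route as the paper. The paper cites \cite[Corollary 2.3]{BurgerIozzi} with $Z=\Gamma\cdot\xi$ to conclude that the strict cocycle $c_\omega$ represents $[c_{\omega,\xi}]$, and then invokes Lemma~\ref{boundaryrep:lemma}. Your comparison argument unpacks exactly why that corollary holds: bounded Borel cochains form a strong resolution, and both the orbit-evaluation map and the composite through $L^\infty$ are chain maps into the relatively injective $\ell^\infty(\Gamma^{\bullet+1})$ extending the identity, hence homotopic.
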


\begin{proof}
Recall from \cite[Theorem 2]{burger2001continuous} or \cite[Theorem 7.5.3]{Monod}
that the canonical isomorphism between the bounded cohomology of $\Gamma$ and the cohomology of the complex $L^\infty_{\rm alt}((\partial \Hn)^{\bullet +1})^\Gamma$
is induced by any $\Gamma$-equivariant 
chain map $\alpha^\bullet\colon L^\infty_{\rm alt}((\partial \Hn)^{\bullet +1})\to C_b^{\bullet+1}(\Gamma)$ such that the following diagram commutes: 
$$
\begin{CD}
\mathbb{R} @>\varepsilon>> L_{\rm alt}^\infty(\partial \Hn)^\Gamma\\
@VV{\rm{Id}}V @VV\alpha^0 V\\
\mathbb{R} @>\varepsilon'>> C_b^{0}(\Gamma)^\Gamma
\end{CD}
$$
where $\varepsilon$ and $\varepsilon'$ denote the usual augmentation maps.	

If we could  define $\alpha^{k}\colon L_{\rm alt}^\infty((\partial \mathbb{H}^n)^{k+1})^\Gamma\to C^k_b(\Gamma)^\Gamma$ by choosing a point $\xi\in\partial \Hn$ and setting 
$\alpha^k(\varphi)(\gamma_0,\dots,\gamma_k)= \varphi (\gamma_0\xi,\dots,\gamma_k\xi)$ for every $\varphi\in  L_{\rm alt}^\infty((\partial \mathbb{H}^n)^{k+1})^\Gamma$, then 
the conclusion would immediately follow from Lemma~\ref{boundaryrep:lemma}.
Unfortunately, since 
elements in $L_{\rm alt}^\infty((\partial \mathbb{H}^2)^{k+1})$ are defined only almost everywhere,  the chain map $\alpha^{k}$ just described is
not  well defined. However, is it now of help the fact that $c_\omega$ is a strict cocycle, i.e.~that it is defined everywhere and is such that
$\delta c_\omega=0$ everywhere. Thanks to this additional property of $c_\omega$, one can make the above strategy work; indeed, by
applying~\cite[Corollary 2.3]{BurgerIozzi} to the case when  $G=\Gamma$,  $X=\bi\Hn$, and $Z=\Gamma\cdot \xi\subseteq X$, where $\xi\in\bi\Hn$ is a fixed basepoint, we can conclude
that the cochain $c_\omega$ represents the class $[c_{\omega,\xi}]\in H^2_b(\Gamma)$, where $c_{\omega,\xi}\in C^2_b(\Gamma)^\Gamma$ is the cocycle described in Subsection~\ref{group:sub}.
The conclusion then follows from Lemma~\ref{boundaryrep:lemma}.

\end{proof}

We are now ready to translate the question whether $\psi$ is injective into a problem of integral geometry on the hyperbolic space.
Let us first describe how this can be achieved when the geodesic flow on $M$ is ergodic (it is well known that this condition is stronger than
asking $M$ to be of the first kind,  see Remark~\ref{dynamic:rem} below). In this case, the action of $\Gamma$ on $\partial \Hn$ is doubly ergodic. This readily implies that any measurable alternating
$\Gamma$-invariant function on $\bi\Hn\times \bi\Hn$ vanishes almost everywhere, hence the space $L^\infty_{\rm alt} ((\bi\Hn)^3)^\Gamma$
contains no non-trivial coboundaries. Therefore, a bounded cocycle in $L^\infty_{\rm alt} ((\bi\Hn)^3)^\Gamma$ represents the trivial bounded cohomology
class if and only if it vanishes almost everywhere. In our case of interest, this implies that $\omega$ belongs to $\ker \psi$ if and only if its 
representative $c_{\omega}\in \mathcal{L}^\infty_{\text{alt}}((\bi\Hn)^3)^\Gamma$ vanishes almost everywhere, hence everywhere (recall that
$c_\omega$ is continuous on $\mathcal{T}$ and vanishes outside $\mathcal{T}$). In other words, $\omega$ belongs to $\ker\psi$ if and only if 
$$
\int_{T(\xi_0,\xi_1,\xi_2)} \widetilde{\omega}=0
$$
for every ideal triangle $T(\xi_0,\xi_1,\xi_2)\in\mathcal{T}$. 

In fact, building on the mere existence of a suitable measure space on which $\Gamma$ acts amenably and doubly ergodically (in the literature, such a space is often called a \emph{strong boundary}
for $\Gamma$), a clever argument described in~\cite{BurgerIozzi} allows us to prove that the same conclusion holds under the weaker hypothesis that $M$ be of the first kind:

\begin{thm}\label{annullamento:thm}
Suppose that $M=\mathbb{H}^n/\Gamma$ is a hyperbolic manifold of the first kind, and take $\omega\in Z\Omega^2_b(M)$. Then
$\psi(\omega)=0$ if and only if 
$$
\int_{T(\xi_0,\xi_1,\xi_2)} \widetilde{\omega}=0
$$
for every  $(\xi_0,\xi_1,\xi_2)\in\mathcal{T}$.
\end{thm}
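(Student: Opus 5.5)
The "if" direction is immediate. If the integrals over all ideal triangles vanish, then $c_\omega$ vanishes identically on $\mathcal{T}$, and by definition also outside $\mathcal{T}$. Proposition~\ref{rep:boundary} says that $c_\omega$ represents $\theta(\psi(\omega))$, so this class is zero. Since $\theta$ is an isomorphism, $\psi(\omega)=0$.

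For the converse, assume $\psi(\omega)=0$. By Proposition~\ref{rep:boundary} and Corollary~\ref{realization:cor}, the cocycle $c_\omega$ is a coboundary in $L^\infty_{\rm alt}((\bi\Hn)^3)^\Gamma$. Hence there is some $b\in L^\infty_{\rm alt}((\bi\Hn)^2)^\Gamma$ with $c_\omega=\delta b$ almost everywhere. The task is to show that $c_\omega=0$ almost everywhere. Once that holds, continuity of $c_\omega$ on the open set $\mathcal{T}$ (Proposition~\ref{properties:c}) gives vanishing at every point of $\mathcal{T}$, because $\mu_L^{\otimes 3}$ charges every nonempty open set. This is exactly the desired conclusion.

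To get $c_\omega=0$ a.e. I would follow \cite[Proposition 3.1]{BurgerIozzi}, whose mechanism I expect to be the following. Choose a countable group $\Lambda$ acting amenably and doubly ergodically on a regular space $B$; for instance, take a lattice or a free group $F<\Gamma$ together with its Poisson boundary, or directly a strong boundary for $\Gamma$ in the sense of Kaimanovich. Then $L^\infty_{\rm alt}(B^3)^\Gamma$ has no nonzero coboundaries, so a null class is represented on $B$ only by the zero cocycle. Next, compare the two resolutions through a $\Gamma$-equivariant measurable boundary map $\phi\colon B\to\bi\Hn$, obtained by amenability (Furstenberg). The pullback $\phi^*c_\omega$ is well defined because $c_\omega$ is a strict cocycle, and this is precisely where Burger--Iozzi's use of strict cocycles enters. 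By their result it represents the same class, which is zero, so double ergodicity forces $\phi^*c_\omega=0$ almost everywhere on $B^3$. In other words, $c_\omega$ vanishes on almost every triple in the support of the pushforward $\phi_*\nu^{\otimes3}$.

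The main obstacle is converting this vanishing on $\phi_*\nu^{\otimes 3}$-almost every triple into vanishing everywhere on $\mathcal{T}$, and it is here that the first-kind hypothesis must be used. The essential support of $\phi_*\nu$ is a closed $\Gamma$-invariant subset of $\bi\Hn$ and is not a single point, so by minimality of the action on $\Lambda(\Gamma)=\bi\Hn$ it is all of $\bi\Hn$. Moreover $\phi_*\nu$ should be nonatomic, so almost every triple lies in $\mathcal{T}$. Consequently the zero set of the continuous function $c_\omega|_{\mathcal{T}}$ contains a dense subset of $\mathcal{T}$, namely the triples of points in the support, and continuity then gives $c_\omega\equiv0$ on $\mathcal{T}$. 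The points needing care are the nonatomicity of $\phi_*\nu$ (which holds for nonelementary $\Gamma$) and the justification that pulling back a strict cocycle along $\phi$ realizes the canonical isomorphism; for the latter I would cite \cite{BurgerIozzi} directly.
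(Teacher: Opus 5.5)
Your proposal is correct and takes the paper's route. The paper proves the nontrivial direction simply by invoking \cite[Proposition 3.1]{BurgerIozzi} with $G_1=G_2=\Gamma$, $\pi=\mathrm{id}$ and $\Lambda(\Gamma)=\partial\Hn$. Your sketch (strong boundary, boundary map $\phi$, pullback of the strict cocycle, double ergodicity, full support of $\phi_*\nu$, continuity on $\mathcal{T}$) reproduces the mechanism behind that proposition.

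Two small corrections. First, the doubly ergodic space must be a strong boundary of $\Gamma$ itself: for a proper subgroup $F<\Gamma$ the support argument only yields vanishing on $\Lambda(F)^3$. Second, the dense subset of $\mathcal{T}$ in the last step is a $(\phi_*\nu)^{\otimes 3}$-conull set of triples, which is dense because this measure has full support. It is not the set of all triples of points in the support, and with this reading nonatomicity of $\phi_*\nu$ is not actually needed.
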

\begin{proof}
By Proposition~\ref{rep:boundary}, 
the statement of the theorem is equivalent to the request that
$c_\omega$ represents the trivial class in bounded cohomology if and only if it vanishes identically.
The ``if'' implication is obvious, while for the ``only if'' implication
we exploit a result from~\cite[Section 3]{BurgerIozzi}; to this aim, we first introduce the notation
used therein. 

Let $X$ be a proper CAT$(-1)$-space, $G_2$ a closed subgroup of $\isom(X)$, $E$ a separable 
coefficient $G_2$-module (i.e.~the topological dual of a Banach $G_2$-module), and
$c\colon \partial X^3\to E$ 
a Borel measurable, alternating, bounded, $G_2$-invariant strict cocycle which is continuous
on the subset $\mathcal{T}$ of pairwise distinct triples in $(\partial X)^3$. 
Also denote by $[c]\in H^2_b(G_2)$ the class represented by $c$ via the Burger-Monod isomorphism
between the bounded cohomology of $G_2$ and the cohomology of the resolution
$L^\infty_{\rm alt}((\partial X)^{\bullet +1})^{G_2}$. 
Finally, let
$\pi\colon G_1\to G_2$ be 
a homomorphism, where $G_1$ is discrete, and denote by $\Lambda(\pi(G_1))\subseteq \partial X$ the limit
set of $\pi(G_1)$.

It is then proved in~\cite[Proposition 3.1]{BurgerIozzi} that if the pull-back $\pi^*([c])\in H^2_b(G_1)$ 
vanishes, then the restriction of $c$ to $\Lambda(\pi(G_1))^3\subseteq (\partial X)^3$ is identically zero. 
 We apply this result to the case when $X=\Hn$, $E=\mathbb{R}$ on which $G_2$ acts trivially, $G_1=G_2=\Gamma$, $\pi\colon \Gamma\to\Gamma$ is the identity, and $c=c_\omega$, thus obtaining that $c_\omega$ represents the trivial class of $H^2_b(\Gamma)$
 only if it vanishes identically. This concludes the proof.
\end{proof}
	
\begin{rem}\label{dynamic:rem}
In general, for a complete hyperbolic manifold $M=\Hn/\Gamma$, the relation between the ergodicity of the geodesic flow and the fact that $M$ is of the first kind is quite subtle. 
The classical Hopf-Tsuji-Sullivan Theorem (see~\cite[Theorem 8.3.4]{nicholls1989ergodic}) ensures that 
the geodesic flow of $M$ is ergodic if and only if the  action of $\Gamma$ on $\partial \mathbb{H}^{n}$ is doubly ergodic.
This condition implies in particular that the action of $\Gamma$ on 
$\bi\Hn\times \bi\Hn$, whence, \emph{a fortiori}, on $\bi\Hn$, is topologically transitive, and this implies in turn that $\Gamma$ is of the first kind.

Conversely, as mentioned in the introduction, it is not true that the geodesic flow of every hyperbolic manifold of the first kind is ergodic. 
However, the ergodicity of the geodesic flow and the condition of being of the first kind
are equivalent in dimension 2 and 3, under the assumption that $\Gamma$ is finitely generated.
In fact, for surfaces,
if $\Gamma$ is finitely generated, then the area of $M$ is finite if and only if $M$ is of the first kind (see e.g.~\cite[Theorems 10.1.2 and 10.4.3]{beardon});
moreover,  the geodesic flow of finite area hyperbolic surfaces is ergodic
\cite{Hopf}. In dimension 3, the equivalence between the two conditions follows from 
the solution of Marden's Tameness Conjecture~\cite{Ago,CalGab} together with the ergodicity of the geodesic flow of topologically tame hyperbolic $3$-manifolds
 of the first kind~\cite[Corollary 2]{Canary}. 
 
 In higher dimensions, to the best of the authors' knowledge, it is not known whether there  exist hyperbolic manifolds of the first kind with finitely generated fundamental group whose geodesic flow is not ergodic.
  \end{rem}

\section{Injectivity of the ideal triangle transform}\label{section:pompeiu}

In this section we prove Theorem \ref{thm:pompeiu_0}. Before going into the proof, we begin by placing this result in its natural context, namely that of \emph{integral geometry}. 

\subsection{Theorem \ref{thm:pompeiu_0} and integral geometry}\label{sec:preliminary_pompeiu} Integral geometry (see, e.g., \cite{gelfand_book}) investigates a variety of transforms associating to a function $f$ defined on some space $X$ its integrals 
\begin{equation}\label{eq:integrals}
	\int_S f\, d\mu_S,
\end{equation}
where $S$ varies in a family $Y$ of submanifolds (or more general subsets) of $X$ equipped with measures $\mu_S$. Typically, both the space $X$ and the family $Y$ carry geometric structures with a symmetry group $G$, and the transform commutes with the action of $G$. The prototypical example is the \emph{Radon transform} $R$ that integrates a function $f$ defined on the Euclidean plane $\R^2$ over all lines, thus yielding a function $Rf$ defined on the two-dimensional Grassmann manifold $\mathrm{Gr}_1(\R^2)$ of such lines. Here $X=\R^2$, $Y=\mathrm{Gr}_1(\R^2)$, and the symmetry group is the Euclidean motion group $M(2)$. 

A basic question in integral geometry is whether the transform of interest is injective, that is, whether a function $f$ is determined by the values of the integrals \eqref{eq:integrals}. The answer to this question depends not only on the transform, but also on the class of functions where $f$ is allowed to vary. The main tool used in these investigations is Fourier analysis on the symmetry group $G$. \newline 

Theorem \ref{thm:pompeiu_0} may be viewed as the solution of the above basic injectivity question for the \emph{ideal triangle transform}, which we define as the operator associating to a function $f$ on the hyperbolic plane $\mathbb{H}^2$ its integrals over all ideal triangles, i.e., the quantities \[
\int_{T(\xi_0,\xi_1,\xi_2)} f\, dA, \qquad (\xi_0,\xi_1,\xi_2)\in \mathcal{T}.
\]
As in the introduction, $dA$ is the hyperbolic area element, and we are using the notation introduced in Section \ref{group:sub} for ideal triangles. For our purposes, it is clearly sufficient to restrict consideration to the set $\mathcal{T}^+$ of positively oriented triples $(\xi_0,\xi_1,\xi_2)$ of distinct elements of the ideal boundary. Once such a triple $(\xi_0,\xi_1,\xi_2)$ is fixed, the set $\mathcal{T}^+$ may be identified with the group of orientation preserving isometries $\mathrm{Isom}^+(\mathbb{H}^2)$ via $g\mapsto (g\xi_0,g\xi_1, g\xi_2)$. 
Thus, we define the ideal triangle transform of a function $f\in L^\infty(\mathbb{H}^2)$ as \[
\mathcal{I}f(g) = \int_{T(g\xi_0,g\xi_1,g\xi_2)} f\, dA\qquad (g\in \mathrm{Isom}^+(\mathbb{H}^2)). 
\]
Since ideal triangles have area $\pi$, the boundedness assumption on $f$ guarantees the absolute convergence of the above integrals. We obtain a linear operator \[
\mathcal{I}:L^\infty(\mathbb{H}^2)\longrightarrow L^\infty(\mathrm{Isom}^+(\mathbb{H}^2))
\] 
satisfying the bound $\lVert\mathcal{I}f\rVert_{L^\infty(\mathrm{Isom}^+(\mathbb{H}^2))}\leq \pi \lVert f\rVert_{L^\infty(\mathbb{H}^2)}$. A standard dominated convergence argument shows that $\mathcal{I}f(g)$ depends continuously on $g$. Finally, it is clear that $\mathcal{I}$ commutes with the action of $\mathrm{Isom}^+(\mathbb{H}^2)$, that is, $\mathcal{I}f(h^{-1}g) = \mathcal{I}(f\circ h^{-1})(g)$ for all $g,h\in \mathrm{Isom}^+(\mathbb{H}^2)$. Theorem \ref{thm:pompeiu_0} can be rephrased as follows. 

\begin{thm}\label{thm:pompeiu}
	The ideal triangle transform $\mathcal{I}$ is injective on $L^\infty(\mathbb{H}^2)$.  	
\end{thm}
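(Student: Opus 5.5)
The plan is to write $\mathcal{I}f$ as a convolution on $G=\mathrm{Isom}^+(\mathbb{H}^2)$ and then apply the hyperbolic Wiener Tauberian theorem of Mohanty--Ray--Sarkar--Sitaram. This requires a kernel that is integrable and radial, so the first task is to produce one from the (non-integrable) ideal triangle.

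\textbf{Step 1: reduce to a convolution identity.} Let $\chi=\mathbf{1}_{T_0}$, where $T_0=T(\xi_0,\xi_1,\xi_2)$ is the reference triangle. Then $\mathcal{I}f(g)=\int_{\mathbb{H}^2} f(x)\chi(g^{-1}x)\,dA(x)$. Since $\chi\notin L^1$ is not available as a radial kernel, I would average over rotations. Suppose $\mathcal{I}f\equiv 0$. By equivariance, $\mathcal{I}(f\circ h^{-1})\equiv 0$ for every $h\in G$.

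Fix a base point $o$ with stabilizer $K\cong SO(2)$. Integrating over $k\in K$ gives
\[
\int_{\mathbb{H}^2} f(x)\,\Phi(d(gk\cdot o, x))\,dA(x)=0
\]
for all $g$. Here $\Phi(r)$ is the angular proportion of the circle of radius $r$ about $o$ that lies inside $T_0$ (suitably placed). Equivalently, $f*\Phi=0$, where $\Phi$ is a radial function on $\mathbb{H}^2$.

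More generally, since $T_0$ may be placed anywhere relative to $o$, we get a whole family of radial kernels
\[
\Phi_h(x)=\int_K \mathbf{1}_{hT_0}(kx)\,dk, \qquad h\in G,
\]
with $f*\Phi_h=0$ for all $h$.

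Each $\Phi_h$ lies in $L^1(\mathbb{H}^2)$. Indeed, for $r$ large, the proportion of the circle of radius $r$ inside an ideal triangle is about $e^{-r}$ times a constant: each cusp contributes an arc of hyperbolic length $O(1)$, while the circle has length $\sim\pi e^{r}$. Since the area element is $\sinh r\,dr$, this gives $\int\Phi_h\,dA=\mathrm{area}(T_0)=\pi<\infty$. So the $\Phi_h$ are radial $L^1$ functions.

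\textbf{Step 2: invoke the Wiener Tauberian theorem.} The MRSS theorem says that if a family of radial $L^1$ functions on $\mathbb{H}^2$ has spherical Fourier transforms with no common zero on the closed strip $\{\lambda: |\mathrm{Im}\,\lambda|\le 1/2\}$, then the $G$-translates span a dense ideal in $L^1(\mathbb{H}^2)$. (A non-vanishing-at-infinity condition is needed in some versions; a single function satisfying suitable decay conditions may suffice.) Granting this, $f*\phi=0$ for all $\phi$ in a dense subspace of $L^1$. Since $f\in L^\infty$ is the dual, this forces $f=0$.

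So I would aim to show that the spherical transforms
\[
\widehat{\Phi_h}(\lambda)=\int_{hT_0}\varphi_\lambda(d(o,x))\,dA(x)
\]
have no common zero for $\lambda$ in the strip, where $\varphi_\lambda$ is the spherical function. Equivalently, I must show that for each $\lambda$ in the strip, the integral of $\varphi_\lambda\circ d(o,\cdot)$ over some ideal triangle is nonzero. Here I use $\varphi_\lambda(r)\sim c\,e^{(i\lambda-1/2)r}$ at infinity, so these integrals converge absolutely when $|\mathrm{Im}\,\lambda|\le 1/2$, with the endpoints needing care.

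\textbf{Step 3: the main obstacle.} The crux is the non-vanishing in Step 2. The function $\varphi_\lambda$ is an eigenfunction of the Laplacian with eigenvalue $-(\lambda^2+1/4)$. So for an ideal triangle $T$, Green's formula gives
\[
\int_T \varphi_\lambda = -(\lambda^2+1/4)^{-1}\int_{\partial T}\partial_\nu \varphi_\lambda,
\]
which is a sum over three geodesics. Alternatively, the integral is a function of the position of $T$, i.e. a $K$-biinvariant smoothing of $\mathcal{I}$ applied to $\varphi_\lambda$.

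My first attempt would be to move $o$ deep into a cusp of $T$, or to let $T$ degenerate. As $o$ escapes, the triangle viewed from $o$ looks like the region between two asymptotic geodesics near $\infty$ in the upper half plane. There $\varphi_\lambda$ can be replaced by its Harish-Chandra asymptotic $c(\lambda)y^{1/2-i\lambda}$ (or the conjugate exponent), integrated explicitly. The leading term is then a product of $c(\lambda)$, which has no zeros in the strip, with an explicit Beta-type integral, which I expect to be nonvanishing.

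The point $\lambda=\pm i/2$, where $\varphi=1$, is trivial, since the integral equals $\pi$. If the asymptotic approach fails at some $\lambda$, I would instead exploit the freedom of choosing two different triangle positions, and check that the resulting explicit Legendre-function expressions cannot vanish simultaneously.
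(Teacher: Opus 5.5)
Your architecture ($L^1$--$L^\infty$ duality plus the hyperbolic Wiener Tauberian theorem) is the paper's, and the radialization is harmless, but the proposal has two genuine gaps.

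\textbf{Step 2.} The Tauberian theorem you quote is false as stated. By the Ehrenpreis--Mautner phenomenon, non-vanishing of the spherical transforms on the closed strip $|\mathrm{Im}\,\lambda|\le\frac12$ does not by itself force density for $L^1$ radial functions on $\mathbb{H}^2$. You flag that something at infinity is needed but never pin it down. In the Mohanty--Ray--Sarkar--Sitaram form the paper uses, one needs three things: $\int|f|\,e^{\epsilon d(z,o)}\,dA<\infty$; that $f$ is not a.e.\ real-analytic; and non-vanishing on the \emph{enlarged} strip, $|\mathrm{Im}\,\lambda|\le\frac12+\epsilon$ in your normalization, where $\varphi_\lambda$ grows exponentially.

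The first two conditions hold for your kernels, so Step 3 must cover the enlarged strip. Note, however, that $\Phi_h(r)\sim Ce^{-2r}$, not $e^{-r}$: each cusp meets the circle of radius $r$ in an arc of length $\asymp e^{-r}$, and with $e^{-r}$ decay the integral $\int\Phi_h\,dA$ would diverge. It is this $e^{-2r}$ decay that gives the weighted integrability for $\epsilon<1$.

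You also do not need a radial family version. $\widehat{\Phi_h}(\lambda)$ is the integral over $b\in\partial\mathbb{H}^2$ of $\widetilde{\mathbf 1_{hT_0}}(\lambda,b)$, which is a non-vanishing factor times $\widetilde{\mathbf 1_{T_0}}(\lambda,h^{-1}b)$. So $\widehat{\Phi_h}(\lambda)\neq0$ for some $h$ already shows that $\widetilde{\mathbf 1_{T_0}}(\lambda,\cdot)$ is not a.e.\ zero. That is exactly the hypothesis of the non-radial theorem applied directly to $\mathbf 1_{T_0}$, as the paper does.

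\textbf{Step 3.} This is the main gap. Once the Tauberian theorem is granted, Step 3 is the whole content of the statement, and you leave it as a heuristic (``which I expect to be nonvanishing'') plus a fallback. As sketched it is also inaccurate:
\begin{itemize}
\item For real $\lambda$ both exponents $e^{(\pm i\lambda-\frac12)r}$ contribute at the same order. With $o=iY$, $Y\to\infty$, and $T_0$ with vertices $-1,1,\infty$, the leading term is the two-frequency expression $Y^{-1/2}\bigl(c(\lambda)A(\lambda)Y^{i\lambda}+c(-\lambda)A(-\lambda)Y^{-i\lambda}\bigr)$, where $A(\lambda)=\int_{T_0}y^{\frac12-i\lambda}\,dA$. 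It is not a single product.
\item $c$ has a pole at $\lambda=0$, where $\varphi_0$ acquires a factor $r$.
\item For $|\mathrm{Im}\,\lambda|\ge\frac12$, one of $A(\pm\lambda)$ diverges at the cusp containing $o$, so the error terms there must be controlled by hand.
\end{itemize}
The relevant integrals $\int_{T_0}y^{s}\,dA=\sqrt{\pi}\,\Gamma(\frac{s+1}{2})/\bigl((1-s)\Gamma(\frac s2+1)\bigr)$, for $-1<\mathrm{Re}\,s<1$, are indeed non-zero. So the cusp-limit idea is plausible, but none of the required analysis is done.

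The paper avoids asymptotics entirely. It evaluates the Helgason--Fourier transform at the non-vertex point $b=0$, where $F(s,\cdot)$ is $C^2$. It computes $F(s,0)$ and $\partial_b^2F(s,0)$ exactly as Gamma ratios, using Beta integrals in polar coordinates and Legendre duplication. It then checks that these have no common zero for $\mathrm{Re}\,s>-1$; the only candidate, $s=1$, is excluded by the pole of $\Gamma(\frac{s-1}{2})$. Continuity in $b$ reduces ``not a.e.\ zero'' to these two pointwise checks.
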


Theorem \ref{thm:pompeiu} may be viewed as a hyperbolic instance of the so-called Pompeiu's problem. A rapid discussion of the latter will help motivate the proof of Theorem \ref{thm:pompeiu}. 

The original Pompeiu's problem (in its two-dimensional version) concerns injectivity of transforms mapping a locally integrable function $f:\R^2\rightarrow \R$ to \begin{equation}\label{eq:pompeiu}
	\mathcal{P}_Ef(R) = \int_{R(E)}f(x,y)\, dx\, dy, 
\end{equation}
where $R$ varies in the (orientation-preserving) Euclidean motion group $M^+(2)$ and $E$ is a fixed bounded subset of the plane of positive area. In other words, given $E$ one asks whether knowledge of all integrals of a locally integrable function $f$ over rotated and translated copies of $E$ suffices to uniquely determine $f$. The answer is negative when $E$ is a disc, and is conjectured to be positive for every other convex (or more generally Jordan) domain. Roughly speaking, one believes that the only obstruction to injectivity of $\mathcal{P}_E$ is rotation-invariance of $E$, which makes the family $\{R(E)\colon\, R\in M^+(2)\}$ two-dimensional when $E$ is a disc, while the family is generally three-dimensional. See, e.g., \cite{zalcman} for an introduction to this topic. As may be glimpsed from the last section of that paper, making progress in these matters requires quite sophisticated analytical methods.

For our purposes, it is important to stress the fact that Pompeiu's problem becomes significantly easier if one restricts consideration to functions $f$ in $L^\infty(\R^2)$. In fact, using the fact that $L^\infty$ is the Banach dual of $L^1$, one may easily see that the transform $\mathcal{P}_E$ is injective on $L^\infty$ if and only if the linear span of \[
\{1_{R(E)}\colon\, R\in M^+(2)\}
\]
is dense in $L^1(\R^2)$ (here $1_F$ is the indicator function of $F\subseteq \R^2$). The classical Wiener's Tauberian Theorem shows that this is in turn equivalent to the following condition: the Fourier transform  \begin{equation}\label{eq:fourier}
	\widehat{1_E}(\xi) = \int_E e^{-2\pi i \xi\cdot x}\, dx \qquad (\xi\in \R^2)
\end{equation} of the indicator of $E$ \emph{does not vanish identically on any circle centered at the origin}. In short, \begin{equation}\label{eq:fourier_pompeiu}
	\{r\in \R\colon\ \widehat{1_E}(r\omega)=0\quad \forall \omega\in S^1\}=\varnothing.
\end{equation} See Section 6 of \cite{brown_schreiber_taylor} for a proof of this fact (which, anyway, will not be used in what follows). E.g., using this neat Fourier analytic characterization, one may rapidly check that $\mathcal{P}_E$ is injective on $L^\infty(\R^2)$ when $E$ is a square and $\mathcal{P}_E$ is not injective on $L^\infty(\R^2)$ when $E$ is a disc, just by computing the Fourier transforms of the indicators of these sets. Moreover, the characterization \eqref{eq:fourier_pompeiu} is valid for all $E$ for which the Fourier integral \eqref{eq:fourier} is absolutely convergent, that is, whenever $E$ has finite area (notice that, if $E$ is unbounded, the operator $\mathcal{P}_E$ is not well-defined on locally integrable functions, so the original formulation of Pompeiu's problem does not make sense). 

The ideal triangle transform is the hyperbolic analogue of \eqref{eq:pompeiu} where $E$ is an ideal triangle (which has finite hyperbolic area). Our proof of Theorem \ref{thm:pompeiu} mirrors the Euclidean argument discussed in the previous paragraph, exploiting a Wiener's Tauberian Theorem on the hyperbolic plane (discussed in the next section), where the role of the customary Euclidean Fourier transform is taken by the Helgason--Fourier transform. 

\subsection{Fourier analysis and Wiener's Tauberian theorem on the hyperbolic plane}

Let us begin with a brief sketch of the basics of Fourier analysis on $\mathbb{H}^2$. See \cite{helgason_groups} for a more thorough treatment. The usual formula for the Fourier transform on the plane $\R^2$, \[
\widehat{f}(\xi)=\int_{\R^2}f(x)e^{-ix\cdot \xi}\, dx 
\] admits the following geometric interpretation. Writing $\xi=\lambda \omega$, where $\lambda\in \R$ and $\omega\in S^1$, we may interpret the quantity $x\cdot \xi=\lambda\, x\cdot \omega$ as a multiple (given by the parameter $\lambda$) of the signed distance from the origin $0\in \R^2$ to the line through $x$ perpendicular to $\omega$. Thus, $e^{i\lambda x\cdot \omega}$ is the "plane wave" of signed frequency $\lambda$, propagating along a family of geodesics of direction $\omega\in S^1$.

On the hyperbolic plane $\mathbb{H}^2$, one similarly considers plane waves propagating along the family of geodesics meeting at a point $b$ in the ideal boundary $\partial \mathbb{H}^2$. The correct analogue of $x\cdot \omega$ is then the signed distance $\left\langle z, b\right\rangle$ from a fixed origin in $\mathbb{H}^2$ to the curve through $z\in \mathbb{H}^2$ that is perpendicular to such a family of geodesics. The latter curve is of course the horocycle through $z$ tangent to $b$, and $\left\langle z,b\right\rangle$ is the \emph{Busemann function} associated to $b$ (based at the given origin). We take the sign of $\left\langle z, b\right\rangle$ to be positive if the origin lies outside the horoball bounded by the horocycle.

Working in the upper half-plane model $\mathcal{H}^2=\{z=x+iy\in \C\colon\, y>0\}$ of the hyperbolic plane, with ideal boundary $\partial \mathcal{H}^2=\R\cup\{\infty\}$ and origin $i\in \mathcal{H}^2$, a standard computation gives \[
\left\langle z, b\right\rangle=\log \frac{(b^2+1)y}{(x-b)^2+y^2}
\]
for $b\in \R$, and \[
\left\langle z, \infty\right\rangle=\log y.
\] The analogue of the Euclidean plane wave $e^{i\lambda\,  x\cdot \omega}$ is the function \begin{equation}\label{eq:hyp_plane_wave}
	e^{(i\lambda+1) \left\langle z, b\right\rangle} = \begin{cases}\left(\frac{(b^2+1)y}{(x-b)^2+y^2}\right)^{i\lambda+1}\quad &b\in \R\\
		y^{i\lambda+1}\quad &b=\infty
	\end{cases}
\end{equation} and the Helgason--Fourier transform of $f\in L^1(\mathcal{H}^2):= L^1(\mathcal{H}^2, dA)$, where $dA(z)=\frac{dx\, dy}{y^2}$ is the hyperbolic area, is defined as
\begin{eqnarray}
	\notag\widetilde{f}(\lambda, b) &=& \int_{\mathcal{H}^2} f(z) e^{(-i\lambda+1) \left\langle z, b\right\rangle} \, dA(z) \\
	\label{eq:fourier_helgason}&=&  \begin{cases}(b^2+1)^{-i\lambda+1}\int_{\mathcal{H}^2} f(x+iy) \left(\frac{y}{(x-b)^2+y^2}\right)^{-i\lambda+1} \, \frac{dx\, dy}{y^2}\qquad &(b\in \R)\\
		\int_{\mathcal{H}^2} f(x+iy) y^{-i\lambda+1} \, \frac{dx\, dy}{y^2}\qquad &(b=\infty)
	\end{cases}
\end{eqnarray}
The factor $+1$ in the exponent in formulas \eqref{eq:hyp_plane_wave} and \eqref{eq:fourier_helgason} is a manifestation of a remarkable difference between Fourier analysis in the Euclidean and in the hyperbolic settings. We refer to Helgason's book \cite{helgason_groups} for more on this point, limiting ourselves to the following remark. While the Euclidean Fourier transform $\widehat{f}(\lambda\omega)$ of an integrable function $f$ is only defined for real values of the frequency parameter $\lambda$, the Helgason--Fourier transform $\widetilde{f}(\lambda, b)$ of an integrable function $f$ converges, for almost every value of $b$, for frequencies $\lambda$ in the strip \[
S=\{\lambda\in \C\colon\, |\mathrm{Im}(\lambda)|\leq 1\}
\] and defines a holomorphic function on the interior of this strip that extends continuously up to its boundary. Under the stronger integrability assumption \begin{equation}\label{eq:extra_integrability}
	\int_{\mathcal{H}^2} |f(z)| e^{\epsilon d(z,i)}\, dA(z)<\infty, 
\end{equation}
where $\epsilon>0$ and $d$ is the hyperbolic distance, the Helgason--Fourier transform extends holomorphically to the interior, and continuously up to the boundary, of the even larger strip \[
S_\epsilon=\{\lambda\in \C\colon\, |\mathrm{Im}(\lambda)|\leq 1+\epsilon\}.
\] This is easily seen from the formula of the Fourier--Helgason transform and the fact that $|\left\langle z, b\right\rangle|\leq d(z,i)$. 

We can now state a version of the Wiener's Tauberian Theorem for the hyperbolic plane due to Mohanty, Ray, Sarkar and Sitaram, that will be sufficient for our analysis of injectivity of the ideal triangle transform. The statement below is Theorem 5.5 of \cite{mohanty_etal} (cf.~also \cite[Theorem 2.8]{helgason_noneuclidean}), which builds on various antecedents, see \cite{benyamini_weit, bennatan_etal, sarkar}. 

\begin{thm}\label{thm:wiener_hyperbolic}
	Let $f$ be a (measurable) function on $\mathcal{H}^2$ that satisfies the integrability assumption $\int_{\mathcal{H}^2} |f(z)| e^{\epsilon d(z,i)}\, dA(z)<\infty$ for some $\epsilon>0$, and is not a.e.~equal to a real-analytic function. Assume that \[
	\{\lambda\in S_\epsilon\colon\, \widetilde{f}(\lambda, b)=0\quad \text{for a.e.}\quad b\in \partial \mathcal{H}^2\}=\varnothing. 
	\]
	Then the linear span of \[
	\{f\circ g^{-1}\colon\, g\in \mathrm{Isom}^+(\mathcal{H}^2)\} 
	\] is dense in $L^1(\mathcal{H}^2)$.
\end{thm}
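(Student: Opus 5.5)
The plan is the classical duality proof of Wiener's theorem, with the Euclidean Fourier transform replaced by Helgason's spectral theory on $\mathcal{H}^2=G/K$, where $G=\mathrm{Isom}^+(\mathcal{H}^2)$ and $K$ is the stabilizer of $i$.

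\textbf{Step 1 (duality).} Suppose the span of $\{f\circ g^{-1}\}$ is not dense in $L^1(\mathcal{H}^2)$. By Hahn--Banach and $L^1(\mathcal{H}^2)^*=L^\infty(\mathcal{H}^2)$, there is a nonzero $\phi\in L^\infty(\mathcal{H}^2)$ with
\[
\int_{\mathcal{H}^2} f(g^{-1}z)\,\phi(z)\,dA(z)=0\qquad\text{for all } g\in G,
\]
that is, $\check f * \phi\equiv 0$ on $G$. Let $V$ be the weak$^*$-closed $G$-invariant subspace of $L^\infty$ annihilated by all translates of $f$. It is nonzero, and it is stable under convolution with $L^1(G)$ and with $K$-biinvariant functions.

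\textbf{Step 2 (produce an eigenfunction of the Laplacian in $V$).} Let $A_\epsilon$ be the commutative convolution algebra of radial functions $h$ with $\int |h|\,e^{(1+\epsilon) d(z,i)}\,dA<\infty$. Its Gelfand spectrum is the strip $S_\epsilon$ modulo $\lambda\mapsto-\lambda$, via the spherical transform $h\mapsto \widetilde h(\lambda)=\int h\,\varphi_\lambda\,dA$. Consider the ideal
\[
J=\{h\in A_\epsilon:\ h*V=0\}.
\]
I would show that $J$ is proper. If it were not, every element of $V$ would vanish: approximate identities can be taken in $A_\epsilon$, and $V\neq 0$. Hence $J$ lies in a maximal ideal $\{\widetilde h(\lambda_0)=0\}$ for some $\lambda_0\in S_\epsilon$.

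Next I would show that $V$ contains a nonzero $\Delta$-eigenfunction $u$ with eigenvalue $-(\lambda_0^2+1)$. To do this, apply $K$-type projections to elements of $V$, and use a spectral-synthesis statement for the single point $\lambda_0$ in $A_\epsilon$. This is the point where the exponential weight $\epsilon>0$ is needed: it supplies holomorphy of transforms across the closed strip $|\mathrm{Im}\lambda|\le 1$, so that division arguments (factoring out $\lambda^2-\lambda_0^2$) work uniformly up to the boundary.

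\textbf{Step 3 (Poisson representation).} Since $u$ is bounded, Helgason's theorem represents it as a Poisson transform
\[
u(z)=\int_{\partial\mathcal{H}^2} e^{(i\lambda_0+1)\langle z,b\rangle}\,dT(b)
\]
of a nonzero analytic functional (hyperfunction) $T$ on the boundary. Pairing $u$ and its translates against $f$, and using the Fubini-type interchange justified by the weighted integrability of $f$, the relations $\check f*u\equiv0$ become
\[
T\bigl(\widetilde{f\circ g}(\lambda_0,\cdot)\bigr)=0\qquad\text{for all } g\in G.
\]
Because $\widetilde{f\circ g}(\lambda,b)$ is $\widetilde f(\lambda,g^{-1}b)$ times an explicit nonvanishing cocycle factor, $T$ annihilates all $G$-translates of $b\mapsto \widetilde f(\lambda_0,b)$ (with the appropriate weight). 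Unless $\widetilde f(\lambda_0,\cdot)=0$ a.e., these translates span a dense subspace of the relevant irreducible principal-series (or complementary, or boundary) representation. This would force $T=0$, a contradiction, and so $\lambda_0$ lies in the forbidden zero set, which is empty by hypothesis.

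The hypothesis that $f$ is not a.e.\ real-analytic is used in this last irreducibility step. It is needed at the exceptional parameters ($\lambda_0=\pm i$, where the representation is reducible and contains the trivial and holomorphic pieces), to rule out $T$ being supported on a proper invariant subspace. I expect the main obstacle to be Step 2, namely spectral synthesis at a point of the closed strip, including the boundary $|\mathrm{Im}\lambda_0|=1$, together with this reducibility issue at the endpoints.
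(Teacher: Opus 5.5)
\textbf{Context.} The paper does not prove this statement; it quotes it from \cite[Theorem~5.5]{mohanty_etal}. Your proposal therefore has to stand on its own. Step~1 is fine, but the argument breaks down in Step~2.

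\textbf{Step 2 proves a false statement.} Step~2 uses nothing about $f$, only that $V\neq 0$. It would therefore show that every nonzero weak$^*$-closed $G$-invariant subspace of $L^\infty(\mathcal{H}^2)$ contains a nonzero bounded eigenfunction of $\Delta$, and that is false. By the Ehrenpreis--Mautner phenomenon (the reason a decay-type hypothesis appears at all; cf.\ the remark after the statement and \cite{benyamini_weit}), there are radial $f_0\in L^1(\mathcal{H}^2)$ with two properties: the spherical transform $\widehat{f_0}$ has no zero on $|\mathrm{Im}\,\lambda|\le 1$, and yet the translates of $f_0$ are not dense. Let $V_0\neq 0$ be their annihilator. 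Suppose $u\in V_0$ were a nonzero bounded eigenfunction, and pick $x_0$ with $u(x_0)\neq 0$. Averaging the translates of $u$ over the stabilizer of $x_0$ stays in $V_0$ and produces $u(x_0)\,\varphi_{\lambda_0}(d(\cdot,x_0))$. Boundedness forces $|\mathrm{Im}\,\lambda_0|\le 1$. Pairing with $f_0\circ g^{-1}$, where $g\cdot i=x_0$, then gives $\widehat{f_0}(\lambda_0)=0$, a contradiction.

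\textbf{Neither link of Step 2 is justified.}
\begin{enumerate}
\item ``$J$ is proper, hence $J$ lies in a maximal ideal'' is not Gelfand theory, because $A_\epsilon$ has no unit. The approximate identity only shows that $J$ is proper. In such convolution algebras proper closed ideals can have empty hull; for $L^1(K\backslash G/K)$ this is exactly the Ehrenpreis--Mautner failure of Wiener's theorem.
\item Passing from a point $\lambda_0$ of the hull to an eigenfunction lying in $V$ is an unproved spectral-analysis claim. It is impossible when $|\mathrm{Im}\,\lambda_0|>1$, where no nonzero bounded eigenfunctions exist, yet such points do belong to the spectrum of $A_\epsilon$.
\end{enumerate}
No local synthesis at $\lambda_0$ can repair this, since the obstruction lives at infinity in the strip.

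\textbf{What is missing.} You need a genuinely Tauberian argument that uses $f$. It must use the weighted integrability of $f$ and, crucially, a lower bound on the size of $\widetilde f$ at infinity. As the paper remarks right after the statement, that is what ``not a.e.\ real-analytic'' provides. This input is fed into Phragm\'en--Lindel\"of-type arguments in the strip, as in \cite{benyamini_weit,bennatan_etal,sarkar}, on which \cite{mohanty_etal} builds.

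\textbf{The hypothesis is misplaced in your Step 3.} You use non-real-analyticity at $\lambda_0=\pm i$, but it is not needed there. At those parameters a bounded eigenfunction in $V$ is harmonic (eigenvalue $-(\lambda_0^2+1)=0$), so the same averaging puts the constants in $V$. That contradicts $\widetilde f(-i,\cdot)=\int f\,dA\neq 0$, with no analyticity involved. More generally, this averaging, together with Helgason's expression of $\varphi_\lambda(d(z,y))$ as a boundary integral of a product of plane waves, reduces all of Step~3 to injectivity of the Poisson transform on $\widetilde f(\lambda_0,\cdot)$ or on $\widetilde f(-\lambda_0,\cdot)$. So Step~3 is the easy part, and the whole difficulty sits in the step you have not supplied.

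\textbf{A smaller point.} Right convolution by radial functions is not a superposition of left translates, so stability of $V$ under it needs an argument. For the $K$-invariant elements of $V$ it follows from commutativity of $L^1(K\backslash G/K)$.
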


The hypothesis that $f$ be non-real-analytic ensures that the Helgason--Fourier transform $\widetilde{f}(\lambda, b)$ does not decay too fast on the line $\mathrm{Im}(\lambda)=0$, a known necessary condition for a theorem like the above to hold true. See \cite[p.~680]{benyamini_weit} for a discussion of this point. At any rate, we will apply Theorem \ref{thm:wiener_hyperbolic} to the indicator of an ideal triangle, for which the assumption is clearly satisfied. Now that all the ingredients are in place, we can proceed with the proof of Theorem \ref{thm:pompeiu}, or equivalently Theorem \ref{thm:pompeiu_0}. 

\subsection{Proof of Theorem \ref{thm:pompeiu}} Fix the ideal triangle \[
T_0=\{z=x+iy\in \mathcal{H}^2\colon\, -1<x<1, \, y>\sqrt{1-x^2}\}.
\]
Let $f\in L^\infty(\mathcal{H}^2)$ be such that $\mathcal{I}f(g)=0$ for all $g\in \mathrm{Isom}^+(\mathcal{H}^2)$. As anticipated in Section \ref{sec:preliminary_pompeiu}, by the duality between $L^1$ and $L^\infty$, we may view $f$ as a continuous linear functional on $L^1(\mathcal{H}^2)$ that vanishes on the set \[
\{1_{g(T_0)}=1_{T_0}\circ g^{-1}\colon\, g\in  \mathrm{Isom}^+(\mathcal{H}^2)\}.
\] In order to prove that $f=0$, it is therefore enough to show that the linear span of the above set is dense in $L^1(\mathcal{H}^2)$. By Theorem \ref{thm:wiener_hyperbolic}, the proof boils down to verifying the following two facts: \begin{enumerate}
	\item[(I)] the integral $\int_{T_0} e^{\epsilon d(z,i)}\, dA(z)$ is finite for some positive $\epsilon$; 
	\item[(II)] there exists no $\lambda$ in the strip $S_\epsilon$ for which the Fourier--Helgason transform $\widetilde{1_{T_0}}(\lambda, b)$ vanishes for a.e.~$b\in \partial\mathcal{H}^2$. 
\end{enumerate}

Let us check (I). By the symmetry of the integral under isometries permuting the vertices of $T_0$, it is enough to consider the contribution of the region $\{y\geq 1\}$. Since \[d(x+iy,i)\leq d(x+iy, iy)+d(iy, i)=1+\log y\] for $x+iy\in T_0\cap \{y\geq 1\}$, we have \[
\int_{T_0\cap \{y\geq 1\}} e^{\epsilon d(z,i)}\, dA(z)\leq 2e^\epsilon \int_1^\infty y^{\epsilon-2}\, dy, 
\]
which is finite for all $\epsilon<1$. \newline 

Let us turn to fact (II). Define \[
F(s, b)=\int_{T_0} \left(\frac{y}{(x-b)^2+y^2}\right)^s\, \frac{dx\, dy}{y^2}, 
\]
where $s$ is a complex parameter and $b$ is real. Notice that \begin{equation}\label{eq:fourier_helgason_F}
	\widetilde{1_{T_0}}(\lambda, b)=(b^2+1)^{-i\lambda+1}F(-i\lambda+1, b)
\end{equation} for all $\lambda\in S_\epsilon$ and $b\in \R$ for which the Fourier--Helgason integral converges. \newline 

Denote by $P$ the half-plane where $\mathrm{Re}(s)>-1$. 

\begin{prop}\label{prp:integrability} The integral $F(s,b)$ is absolutely convergent for $b\in \R\setminus \{-1,+1\}$ and $s$ in the half-plane $P$. For each $s\in P$ the function $F(s,\cdot)$ is of class $C^2$ (away from $-1$ and $+1$), and for each $b\in \R\setminus \{-1,+1\}$ and $j\leq 2$, the function $\frac{\partial^j F}{\partial b^j}(\cdot, b)$ is holomorphic on $P$. The derivatives can be computed differentiating under the integral sign. In particular, \begin{equation}\label{eq:second_derivative_F}
		\frac{\partial^2 F}{\partial b^2}(s,0) = \int_{T_0}\left(\frac{y}{x^2+y^2} \right)^s\frac{(4s^2+2s)x^2-2sy^2}{(x^2+y^2)^2}\, \frac{dx\, dy}{y^2} 
	\end{equation} for all $s\in P$. 
\end{prop}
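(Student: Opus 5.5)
We need to prove absolute convergence of $F(s,b)$ for $b\neq\pm1$ and $\mathrm{Re}(s)>-1$, the regularity in $b$, holomorphy in $s$, and the formula for the second derivative. The plan is to find a single integrable majorant, locally uniform in $(s,b)$, for the integrand and for its first two $b$-derivatives. Standard theorems on parameter integrals then give everything at once: continuity and differentiation under the integral sign via dominated convergence, and holomorphy via Morera's theorem combined with Fubini.

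Write $\rho_b(x,y)=\frac{y}{(x-b)^2+y^2}$. The integrand is $\rho_b^s y^{-2}$, and $|\rho_b^s|=\rho_b^{\sigma}$ with $\sigma=\mathrm{Re}(s)$. The domain $T_0$ has three ends, at the cusps $\infty$, $-1$ and $+1$, and on any compact part of $\overline{T_0}\cap\mathcal{H}^2$ everything is smooth and bounded. I therefore split $T_0$ into the region $\{y\geq 1\}$ and two neighbourhoods of the cusps $\pm1$, and treat each separately.

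\textbf{Cusp at $\infty$.} Here $|x|<1$ and $y\geq1$, so $\rho_b\asymp 1/y$ uniformly for $b$ in compacts, since $(x-b)^2+y^2\asymp y^2$. The integrand is therefore bounded by $C\,y^{-\sigma-2}$, and $\int_1^\infty y^{-\sigma-2}\,dy<\infty$ exactly when $\sigma>-1$. This is where the half-plane $P$ comes from.

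\textbf{Cusps at $\pm1$.} Consider the cusp at $+1$. Near it, $T_0$ is the thin region between the circle $x^2+y^2=1$ and the line $x=1$, with $y\to0$. Since $b\neq1$, we have $(x-b)^2+y^2\geq c>0$ there, so $\rho_b\asymp y$ and the integrand is $\asymp y^{\sigma-2}$. The horizontal width of the region at height $y$ is $1-\sqrt{1-y^2}\asymp y^2$. The contribution is then $\int_0 y^{\sigma-2}\cdot y^2\,dy$, which converges for $\sigma>-1$. Equivalently, one can apply the isometry that sends $1$ to $\infty$ and argue as in the previous case, since the hypothesis $b\neq\pm1$ makes the conjugated integrand behave like the $b=\infty$ case. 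The cusp at $-1$ is handled identically. Note that if $b=\pm1$ then $\rho_b$ blows up at the corresponding cusp, which is why those two values are excluded.

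\textbf{Derivatives in $b$.} A direct computation gives
\[
\partial_b\rho_b^s=s\rho_b^s\,\frac{2(x-b)}{(x-b)^2+y^2},
\]
and the second derivative has the same structure, with factors of the form $\frac{s^2(x-b)^2}{((x-b)^2+y^2)^2}$ and $\frac{s}{(x-b)^2+y^2}$. In every region these extra factors are bounded. At the cusp $\infty$ they are in fact $O(y^{-2})$, and near $\pm1$ they are $O(1)$ because the denominator stays away from $0$. The same majorant $C(1+|s|)^2\,(\text{previous bound})$ therefore works, locally uniformly for $s$ in compact subsets of $P$ and $b$ in compact subsets of $\mathbb{R}\setminus\{\pm1\}$. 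This majorant gives $C^2$ regularity in $b$. Holomorphy in $s$ of $F$ and of its $b$-derivatives follows because each integrand is entire in $s$ and dominated locally uniformly.

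\textbf{The formula at $b=0$.} Setting $b=0$ and expanding the second derivative gives
\[
\rho_0^s\cdot\frac{4s^2x^2+2s(x^2-y^2)-2s x^2+2sx^2}{(x^2+y^2)^2}.
\]
Tidying this up, using $\partial_b^2 \log\rho_b\big|_{b=0}=\frac{2(x^2-y^2)}{(x^2+y^2)^2}$ and $\big(\partial_b\log\rho_b\big|_{b=0}\big)^2=\frac{4x^2}{(x^2+y^2)^2}$, produces $\frac{(4s^2+2s)x^2-2sy^2}{(x^2+y^2)^2}$, which is \eqref{eq:second_derivative_F}.

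The main point requiring care is the cusp analysis, which produces the threshold $\mathrm{Re}(s)>-1$ and requires $b\neq\pm1$. Everything else is routine domination.
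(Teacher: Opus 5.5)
Your proposal is correct and takes essentially the same route as the paper. Both arguments bound the integrand and its first two $b$-derivatives by $C\min\{y,y^{-1}\}^{\mathrm{Re}(s)}y^{-2}$, locally uniformly in $(s,b)$, using that $(x-b)^2+y^2$ stays away from $0$ on $T_0$ when $b\neq\pm1$, and then conclude by dominated convergence and Morera/Fubini. The only difference is that near the cusps $\pm1$ you integrate in $x$ first (width $\asymp y^2$), whereas the paper integrates in $y$ first and obtains the integrable factor $(1-x^2)^{-1+\delta/2}$; the stray $-2sx^2+2sx^2$ in your displayed numerator cancels, so your final formula agrees with the paper's.
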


By similar arguments to those used below, one may in fact check that $F(s,b)$ is infinitely differentiable in $P\times (\R\setminus \{-1,1\})$, but we will not need this. 

\begin{proof}
	For each fixed $(x,y)\in T_0$, the integrand $\left(\frac{y}{(x-b)^2+y^2}\right)^sy^{-2}$ is jointly smooth in $(b,s)\in \R\times \C$ and holomorphic in $s$. In order to justify the statement by a standard dominated convergence argument, all we need to show is that  \begin{equation}\label{eq:dominated_convergence}
		\max_{(b,s)\in K,\,  j\leq 2} \left|\left(\frac{\partial}{\partial b}\right)^j\left(\frac{y}{(x-b)^2+y^2} \right)^s\right|\in L^1\left(T_0, \frac{dx\, dy}{y^2}\right)
	\end{equation} for each compact subset $K$ of $P \times (\R\setminus \{-1,+1\})$. This shows that $\frac{\partial^j F}{\partial b^j}$ ($j\leq 2$) is jointly continuous, while holomorphicity follows as usual from Morera's Theorem (via Fubini's Theorem and the holomorphicity of the integrand). \newline 
	
	We use a couple of elementary bounds. Let $b\in \R\setminus \{-1,1\}$. If $(x,y)\in T_0$, then \[
	(x-b)^2+y^2\geq (x-b)^2+1-x^2 = 1-2xb+b^2.
	\] Obviously, $\min_{x\in [-1,1]}1-2xb+b^2$ is achieved at one of the endpoints, so \begin{equation}\label{eq:elementary_bound}
		(x-b)^2+y^2\geq c_b:=\min\{(1-b)^2, (1+b)^2\} \qquad \forall (x,y)\in T_0. 
	\end{equation}
	Let $T_\mathrm{up}:=T_0\cap \{y\geq 1\}$ and $T_\mathrm{low}:=T_0\cap \{y\leq 1\}$. By \eqref{eq:elementary_bound}, for $(x,y)\in T_\mathrm{low}$, we get \[
	c_b\leq (x-b)^2+y^2\leq (1+|b|)^2+1=:k_b, 
	\]
	while for $(x,y)\in T_\mathrm{up}$, we have \[
	y^2\leq (x-b)^2+y^2\leq (1+|b|)^2+y^2\leq k_by^2.
	\] Let $\tilde{c}_b:=\min\{1,c_b\}$. Combining the above estimates, we get  \[
	\frac{1}{k_b}\min\{y, y^{-1}\}\leq\frac{y}{(x-b)^2+y^2} \leq \frac{1}{\tilde{c}_b} \min\{y, y^{-1}\}\qquad \forall (x,y)\in T_0. 
	\]
	If $s\in \C$, \begin{equation}\label{eq:elementary_bound_2}
		\left|\left(\frac{y}{(x-b)^2+y^2} \right)^s\right|\leq C_{b,s}\min\{y, y^{-1}\}^{\mathrm{Re}(s)}\qquad \forall (x,y)\in T_0, 
	\end{equation}
	where $C_{b,s}=\max\{k_b^{-\mathrm{Re}(s)}, \tilde{c}_b^{-\mathrm{Re}(s)}\}$. All that matters for us is that the quantity $C_{b,s}$ remains bounded when $(b,s)$ varies in a compact subset of $(\R\setminus \{-1,+1\})\times \C$. We say that such a quantity is \emph{admissible}.
	
	Next, we compute 
	\[
	\frac{\partial}{\partial b}\left(\frac{y}{(x-b)^2+y^2} \right)^s = s\left(\frac{y}{(x-b)^2+y^2} \right)^s\frac{2(x-b)}{(x-b)^2+y^2}.
	\]
	By \eqref{eq:elementary_bound} and \eqref{eq:elementary_bound_2},  \begin{equation}\label{eq:elementary_bound_3}
		\left|\frac{\partial}{\partial b}\left(\frac{y}{(x-b)^2+y^2} \right)^s\right|\leq C_{b,s}'\min\{y, y^{-1}\}^{\mathrm{Re}(s)}\qquad \forall (x,y)\in T_0, 
	\end{equation}
	where $C_{b,s}':=2|s|C_{b,s}c_b^{-1}(1+|b|)$ is admissible. Similarly, \begin{eqnarray*}
		&&\left(\frac{\partial}{\partial b}\right)^2\left(\frac{y}{(x-b)^2+y^2} \right)^s \\
		&=& s^2\left(\frac{y}{(x-b)^2+y^2} \right)^s\frac{4(x-b)^2}{((x-b)^2+y^2)^2}\\
		&+&s\left(\frac{y}{(x-b)^2+y^2} \right)^s\left(-\frac{2}{(x-b)^2+y^2}+\frac{4(x-b)^2}{((x-b)^2+y^2)^2}\right).
	\end{eqnarray*}
	and \begin{equation}\label{eq:elementary_bound_4}
		\left|\left(\frac{\partial}{\partial b}\right)^2\left(\frac{y}{(x-b)^2+y^2} \right)^s\right|\leq C_{b,s}''\min\{y, y^{-1}\}^{\mathrm{Re}(s)}\qquad \forall (x,y)\in T_0, 
	\end{equation}
	where $C_{b,s}'':=(4|s|^2(1+|b|)^2c_b^{-2}+2|s|c_b^{-1}+4|s|(1+|b|)^2c_b^{-2})C_{b,s}$ is also admissible. \newline

	We can finally look at the integral $F(b,s)$. Fix $\delta>0$, $R>0$, and assume that $\mathrm{Re}(s)\geq -1+\delta$ and $|s|\leq R$. By \eqref{eq:elementary_bound_2}, \eqref{eq:elementary_bound_3} and \eqref{eq:elementary_bound_4}, the integrand and its first two derivatives in $b$ are bounded in modulus by $C_{b,\delta, R}\min\{y, y^{-1}\}^{-1+\delta}y^{-2}$, where $C_{b, R}$ remains bounded if $b$ stays away from $+1$ and $-1$. We have \begin{eqnarray*}
		&& \int_{T_0} 	\min\{y, y^{-1}\}^{-1+\delta}y^{-2}\, dx\, dy \\
		&=& \int_{-1}^{+1}\left(\int_{\sqrt{1-x^2}}^1  y^{-3+\delta}\, dy+\int_{1}^{+\infty}  y^{-1-\delta}\, dy \right)\, dx\\
		&=& \int_{-1}^{+1} (2-\delta)^{-1}[(1-x^2)^{-1+\frac{\delta}{2}}-1]\, dx + 2\delta^{-1}.
	\end{eqnarray*}
	Since $1-x^2$ vanishes with non-zero derivative at $-1$ and $+1$, the above quantity is finite. This completes the proof of \eqref{eq:dominated_convergence}. Identity \eqref{eq:second_derivative_F} is obtained evaluating the formula for $\frac{\partial^2F}{\partial b^2}$ at $b=0$. 
\end{proof} 

The proposition just proved shows that $F(-i\lambda+1,b)$, and hence the Fourier--Helgason transform of the indicator of the ideal triangle, is convergent for $\mathrm{Im}(\lambda)>-2$, which includes the interior of the strip $S_1$, as expected from the extra-integrability of the indicator (fact (I)). The next proposition gives explicit representations of $F(s,0)$ and $\frac{\partial^2F}{\partial b^2}(s,0)$ as ratios of Gamma functions. Fact (II) will be deduced from these formulas. 

\begin{prop}\label{prp:ratio_gamma}
	If $s\in P$, then \begin{eqnarray}
		\label{eq:F1}F(s,0)&=&\frac{1-2^{1-s}}{2}\frac{\sqrt{\pi}\, \Gamma\left(\frac{s-1}{2}\right)}{\Gamma\left(\frac{s}{2}+1\right)}\\
		\label{eq:F2}\frac{\partial^2 F}{\partial b^2}(s,0) &=&2(1-(s+1)2^{-s})\frac{\sqrt{\pi}\, \Gamma\left(\frac{s-1}{2}\right)}{\Gamma\left(\frac{s}{2}\right)}. 
	\end{eqnarray}
\end{prop}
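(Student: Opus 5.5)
The plan is to evaluate both integrals in polar coordinates centred at the origin $0\in\partial\mathcal{H}^2$. This turns the region $T_0$ into a simple angular sector, so that everything reduces to Beta integrals. Writing $x=r\cos\theta$, $y=r\sin\theta$ with $\theta\in(0,\pi)$, we have
\[
\frac{y}{x^2+y^2}=\frac{\sin\theta}{r},\qquad \frac{dx\,dy}{y^2}=\frac{dr\,d\theta}{r\sin^2\theta}.
\]
The region $T_0$ becomes $\{1<r<1/|\cos\theta|\}$: the condition $x^2+y^2>1$ gives $r>1$, and $|x|<1$ gives $r|\cos\theta|<1$. The radial integral can then be done explicitly, giving
\[
F(s,0)=\frac1s\int_0^\pi \sin^{s-2}\theta\,\bigl(1-|\cos\theta|^s\bigr)\,d\theta=\frac2s\int_0^{\pi/2}\sin^{s-2}\theta\,\bigl(1-\cos^s\theta\bigr)\,d\theta .
\]

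Next I would split this into two Beta integrals using
\[
\int_0^{\pi/2}\sin^a\theta\cos^b\theta\,d\theta=\tfrac12 B\!\left(\tfrac{a+1}{2},\tfrac{b+1}{2}\right).
\]
The two pieces separately converge only for $\mathrm{Re}(s)>1$, although their difference converges on all of $P$. So I would first prove \eqref{eq:F1} for real $s>1$ and then extend it to $P$ by analytic continuation. This is legitimate because the left-hand side is holomorphic on $P$ by Proposition \ref{prp:integrability}, and the right-hand side is meromorphic with a removable singularity at $s=1$, where the factor $1-2^{1-s}$ vanishes and cancels the pole of $\Gamma(\frac{s-1}{2})$.

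For $s>1$ this yields
\[
F(s,0)=\frac1s\Bigl[B\bigl(\tfrac{s-1}{2},\tfrac12\bigr)-B\bigl(\tfrac{s-1}{2},\tfrac{s+1}{2}\bigr)\Bigr].
\]
The first term equals $\sqrt\pi\,\Gamma(\frac{s-1}{2})/\Gamma(\frac s2)$. For the second I would use the duplication formula $\Gamma(s)=2^{s-1}\pi^{-1/2}\Gamma(\frac s2)\Gamma(\frac{s+1}{2})$, which turns it into $2^{1-s}\sqrt\pi\,\Gamma(\frac{s-1}{2})/\Gamma(\frac s2)$. Finally, $s\Gamma(\frac s2)=2\Gamma(\frac s2+1)$ gives \eqref{eq:F1}.

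For \eqref{eq:F2} I would start from formula \eqref{eq:second_derivative_F} in Proposition \ref{prp:integrability} and use the same substitution. The integrand becomes
\[
\sin^{s-2}\theta\,\bigl((4s^2+2s)\cos^2\theta-2s\sin^2\theta\bigr)\,r^{-s-3},
\]
and its radial integral is $(1-|\cos\theta|^{s+2})/(s+2)$. Expanding produces four integrals of the form $\int_0^{\pi/2}\sin^a\theta\cos^b\theta\,d\theta$, which are again Beta functions. I would evaluate them for real $s$ large, reduce each Gamma ratio to a multiple of $\sqrt\pi\,\Gamma(\frac{s-1}{2})/\Gamma(\frac s2)$ using $\Gamma(z+1)=z\Gamma(z)$ and the duplication formula, and then continue analytically to $P$ as before.

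I expect the main obstacle to be this last bookkeeping step. The terms coming from the "$1$" part should combine, after the factor $1/(s+2)$ cancels, into $2\sqrt\pi\,\Gamma(\frac{s-1}{2})/\Gamma(\frac s2)$. The terms coming from the $\cos^{s+2}$ part should give the coefficient $-2(s+1)2^{-s}$. This needs care with shifted Gamma arguments such as $\Gamma(\frac{s+1}{2})$ and $\Gamma(\frac{s+3}{2})$, and with the justification of the analytic continuation, where individual pieces diverge near $\mathrm{Re}(s)=1$.
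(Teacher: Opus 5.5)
Your proposal is correct and follows essentially the same route as the paper: polar coordinates centred at $0$, reduction to Beta integrals, the Legendre duplication formula, and analytic continuation from $\mathrm{Re}(s)>1$ to $P$ using the holomorphy given by Proposition \ref{prp:integrability}. The only cosmetic difference is in \eqref{eq:F2}. You expand into four Beta integrals, whereas the paper groups them into two shifted variants of the identity used for $F(s,0)$; carrying out your bookkeeping does give the predicted contributions $2$ and $-2(s+1)2^{-s}$, each multiplied by $\sqrt{\pi}\,\Gamma\left(\frac{s-1}{2}\right)/\Gamma\left(\frac{s}{2}\right)$.
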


\begin{proof} In terms of polar coordinates $(r,\theta)$ such that $x=r\cos\theta$ and $y=r\sin\theta$, the right half of the ideal triangle $T_0^+=T_0\cap \{\mathrm{Re}(s)\geq 0\}$ is defined by the conditions \[
	0< \theta\leq \pi/2,\qquad 1< r< \frac{1}{\cos\theta}.
	\]
	Since the integrand of $F(s,0)$ is invariant under the symmetry $(x,y)\mapsto (-x,y)$, for $s\neq 0$ with $\mathrm{Re}(s)>-1$ we have \begin{eqnarray}
		\notag F(s,0)&=&2\int_{T_0^+} \left(\frac{y}{x^2+y^2}\right)^s\, \frac{dx\, dy}{y^2}\\
		\notag &=&2\int_0^{\pi/2}\int_1^{1/\cos\theta} r^{-s-1}(\sin\theta)^{s-2}\, dr \, d\theta\\
		\label{eq:F}&=&\frac{2}{s}\int_0^{\pi/2}(1-(\cos\theta)^s)(\sin\theta)^{s-2} \, d\theta.
	\end{eqnarray}
	Notice that $1-(\cos\theta)^s$ vanishes (at least) quadratically at $\theta=0$ (for each $s\in \C$), so the integral in the last line is absolutely convergent for $s\in P$ (as it should in view of Proposition \ref{prp:integrability}) and defines a holomorphic function there. This holomorphic function has a zero at $s=0$, which cancels the singularity $\frac{2}{s}$, consistently with the holomorphicity of $F(s,0)$ on the half-plane $P$. 
	
	We now use the Gamma (or Beta) function identity (cf.~identity (1.1.21) in \cite{andrews_book})
	\begin{equation}\label{eq:beta}
		\int_0^{\pi/2}(\sin\theta)^{s_1}(\cos\theta)^{s_2}\, d\theta = \frac{\Gamma\left(\frac{s_1+1}{2}\right)\Gamma\left(\frac{s_2+1}{2}\right)}{2\Gamma\left(\frac{s_1+s_2}{2}+1\right)}\qquad (\mathrm{Re}(s_1),\, \mathrm{Re}(s_2)>-1). \end{equation} We get \begin{equation}\label{eq:gamma_identity}
		\int_0^{\pi/2}(1-(\cos\theta)^s)(\sin\theta)^{s-2} \, d\theta = \frac{\Gamma\left(\frac{s-1}{2}\right)}{2}\left(\frac{\sqrt{\pi}}{\Gamma\left(\frac{s}{2}\right)}-\frac{\Gamma\left(\frac{s+1}{2}\right)}{\Gamma(s)}\right), 
	\end{equation}
	where we also used the identity $\Gamma\left(\frac{1}{2}\right)=\sqrt{\pi}$.  Identity \eqref{eq:gamma_identity} is first established for $\mathrm{Re}(s)>1$ and then extended by analytic continuation to the half-plane $P$, where the left hand side is holomorphic. 
	
	We next employ the Legendre duplication formula (Theorem 1.5.1 of \cite{andrews_book})\[
	\Gamma(z)\Gamma\left(z+\frac{1}{2}\right)=2^{1-2z}\sqrt{\pi}\, \Gamma(2z), 
	\] 
	which evaluated at $z=\frac{s}{2}$ and rearranged, may be rewritten in the form \begin{equation}\label{eq:legendre}
		\frac{\Gamma\left(\frac{s+1}{2}\right)}{\Gamma(s)}=2^{1-s}\frac{\sqrt{\pi}}{\Gamma\left(\frac{s}{2}\right)}.
	\end{equation}
	Plugging this into \eqref{eq:gamma_identity}, we finally obtain  \begin{equation}\label{eq:gamma_ratio_1}
		\int_0^{\pi/2}(1-(\cos\theta)^s)(\sin\theta)^{s-2} \, d\theta =\frac{1-2^{1-s}}{2}\frac{\sqrt{\pi}\, \Gamma\left(\frac{s-1}{2}\right)}{\Gamma\left(\frac{s}{2}\right)}.
	\end{equation}
	By \eqref{eq:F}, \eqref{eq:gamma_ratio_1}, and the functional equation for the Gamma function, we finally obtain \eqref{eq:F1}. 
	
	We now treat similarly the second derivative of $F$ with respect to $b$. By the identity of Proposition \ref{prp:integrability}, \begin{eqnarray}
		\notag	\frac{\partial^2 F}{\partial b^2}(s,0) &=& 2\int_{T_0^+}\left(\frac{y}{x^2+y^2} \right)^s\frac{(4s^2+2s)x^2-2sy^2}{(x^2+y^2)^2}\, \frac{dx\, dy}{y^2} \\
		\notag	&=&2\int_0^{\pi/2}\int_1^{1/\cos\theta} r^{-s-3}(\sin\theta)^{s-2}((4s^2+2s)(\cos\theta)^2-2s(\sin\theta)^2)\, dr \, d\theta\\
		\label{eq:second_derivative}	&=&\frac{4}{s+2}\int_0^{\pi/2}(1-(\cos\theta)^{s+2})(\sin\theta)^{s-2}((2s^2+s)-(2s^2+2s)(\sin\theta)^2)\, d\theta.
	\end{eqnarray}
	We now need two variants of \eqref{eq:gamma_ratio_1}. The first follows from \eqref{eq:beta}, the functional equation, and the duplication formula \eqref{eq:legendre}, 
	\begin{eqnarray}
		\notag	\int_0^{\pi/2}(1-(\cos\theta)^{s+2})(\sin\theta)^{s-2} \, d\theta &=&\frac{\Gamma\left(\frac{s-1}{2}\right)}{2}\left(\frac{\sqrt{\pi}}{\Gamma\left(\frac{s}{2}\right)}-\frac{\Gamma\left(\frac{s+3}{2}\right)}{\Gamma(s+1)}\right)\\ \notag &=&\frac{\Gamma\left(\frac{s-1}{2}\right)}{2}\left(\frac{\sqrt{\pi}}{\Gamma\left(\frac{s}{2}\right)}-\frac{s+1}{2s}\frac{\Gamma\left(\frac{s+1}{2}\right)}{\Gamma(s)}\right)\\ \label{eq:gamma_ratio_2}&=&\frac{1}{2}\left(1-\frac{s+1}{s}2^{-s}\right)\frac{\sqrt{\pi}\, \Gamma\left(\frac{s-1}{2}\right)}{\Gamma\left(\frac{s}{2}\right)}.
	\end{eqnarray} Evaluating \eqref{eq:gamma_ratio_1} at $s+2$ and using once more the functional equation, we get the second one: \begin{equation}\label{eq:gamma_ratio_3}
		\int_0^{\pi/2}(1-(\cos\theta)^{s+2})(\sin\theta)^s \, d\theta =\frac{1-2^{-1-s}}{2}\frac{s-1}{s}\frac{\sqrt{\pi}\, \Gamma\left(\frac{s-1}{2}\right)}{\Gamma\left(\frac{s}{2}\right)}.
	\end{equation}
	Plugging \eqref{eq:gamma_ratio_2} and \eqref{eq:gamma_ratio_3} into \eqref{eq:second_derivative}, we finally obtain \begin{eqnarray*}
		\notag\frac{\partial^2 F}{\partial b^2}(s,0) &=&\frac{2}{s+2}\left((2s^2+s)\left(1-\frac{s+1}{s}2^{-s}\right)-(2s^2+2s)\frac{s-1}{s}(1-2^{-1-s})\right)\frac{\sqrt{\pi}\, \Gamma\left(\frac{s-1}{2}\right)}{\Gamma\left(\frac{s}{2}\right)}\\
		&=&2(1-(s+1)2^{-s})\frac{\sqrt{\pi}\, \Gamma\left(\frac{s-1}{2}\right)}{\Gamma\left(\frac{s}{2}\right)}, 
	\end{eqnarray*}
	which is \eqref{eq:F2}. 
\end{proof}

We are now in a position to verify fact (II) (for any $\epsilon<1$), thus completing the proof of Theorem \ref{thm:pompeiu_0}. In view of \eqref{eq:fourier_helgason_F}, this amounts to checking that the function $b\mapsto F(s,b)$ is not a.e.~zero for any $s$ such that $\mathrm{Re}(s)\in (-1,3)$. By Proposition \ref{prp:ratio_gamma}, it is more than enough to show that the two conditions \begin{equation}\label{eq:system_F}
	\begin{cases} (1-2^{1-s})\frac{\Gamma\left(\frac{s-1}{2}\right)}{\Gamma\left(\frac{s}{2}+1\right)}=0\\
		(1-(s+1)2^{-s})\frac{\Gamma\left(\frac{s-1}{2}\right)}{\Gamma\left(\frac{s}{2}\right)}=0
	\end{cases}
\end{equation}
are never simultaneously satisfied in $P$. Now the Gamma function has no zeros, the denominator $\Gamma\left(\frac{s}{2}+1\right)$ has no poles in $P$, and the only pole of $\Gamma\left(\frac{s}{2}\right)$ is canceled by a zero of the numerator; therefore, a solution $s$ to the above system must satisfy  \[
\begin{cases} 1-2^{1-s}=0\\
	1-(s+1)2^{-s}=0
\end{cases}
\] which means $s=1$. Since the numerator $\Gamma\left(\frac{s-1}{2}\right)$ has a simple pole at $s=1$ and the factors $1-2^{1-s}$ and $1-(s+1)2^{-s}$ have simple zeros at $s=1$, this value is neither a zero of $F(s,0)$ nor of $\frac{\partial^2F}{\partial b^2}(s,0)$. Thus, \eqref{eq:system_F} has no solutions, and the proof is complete. 

\section{Proof of Theorems~\ref{thm:pompeiu_1} and~\ref{main:thm}}\label{section:proof}
As anticipated in the introduction, it is easy to deduce Theorem~\ref{thm:pompeiu_1} from Theorem~\ref{thm:pompeiu_0}. Indeed,
let $\widetilde{\omega}\in Z\Omega^2_b(\mathbb{H}^n)$ be such that
$$
\int_T \widetilde{\omega}=0
$$
for every ideal triangle $T\subseteq \Hn$. Take $p\in\mathbb{H}^n$ and an orthonormal frame $(e_1,e_2)$ in the tangent space $T_p \Hn$.
Let $K\subseteq \Hn$ be the hyperbolic plane containing $p$ and such that the tangent plane $T_p K$ is generated by $e_1$ and $e_2$. If $i\colon K\to \Hn$
is the inclusion, then we have $i^*\widetilde{\omega}=f\, dA$ for some $f\in L^\infty (K)$, where $dA$ is the area form on $K$ (in fact, $f$ is even smooth). 
Our assumption
implies in particular that $$\int_T f\, dA=\int_T \widetilde{\omega}=0$$
for every ideal triangle $T\subseteq K$.
Since $K$ is just an isometric copy of $\Hp$, Theorem~\ref{thm:pompeiu_0} now implies that $f=0$ almost everywhere on $K$ (hence, everywhere on $K$, since $f$ is smooth).
In particular, $(i^*\widetilde{\omega})_p=0$, and $\widetilde{\omega}_p(e_1,e_2)=0$. Due to the arbitrariness of $p$ and of $e_1,e_2$, this gives in turn $\widetilde{\omega}=0$. This proves Theorem~\ref{thm:pompeiu_1}.

\bigskip

Finally, let $M=\Hn/\Gamma$ be a hyperbolic manifold of the first kind, let $\psi\colon Z\Omega^2_b(M)\to H^2_b(M)$ be the map described in the introduction, and take
$\omega\in \ker \psi$. Theorem~\ref{annullamento:thm} implies that $
\int_T \widetilde{\omega}=0
$
for every ideal triangle $T\subseteq \Hn$, hence $\omega=0$ by Theorem~\ref{thm:pompeiu_1}. This concludes the proof of Theorem~\ref{main:thm}.

\bibliography{math_papers_final}
\bibliographystyle{alpha}

\end{document}